\title{Evasion Paths by Homotopy Limits}
\author{Gunnar Carlsson, Benjamin Filippenko, Wyatt Mackey}
\date{}
\newtheorem{lem}{Lemma}[section]
\newtheorem{thm}[lem]{Theorem}
\newtheorem{prop}[lem]{Proposition}
\theoremstyle{definition}
\newtheorem{defn}[lem]{Definition}
\newtheorem{rem}[lem]{Remark}
\newtheorem{exmp}[lem]{Example} 
\tikzset{pullback/.style={minimum size=1.2ex,path picture={
\draw[opacity=1,black,-,#1] (-0.5ex,-0.5ex) -- (0.5ex,-0.5ex) -- (0.5ex,0.5ex);%
}}}
\tikzset{pushout/.style={minimum size=1.2ex,path picture={
\draw[opacity=1,black,-,#1] (-0.5ex,-0.5ex) -- (-0.5ex,0.5ex) -- (0.5ex,0.5ex);%
}}}
\newcommand{\id}{\operatorname{id}}
\newcommand\nc{\newcommand}
\nc{\on}{\operatorname}
\nc\renc{\renewcommand}
\nc{\BR}{\mathbb R}
\nc{\BC}{\mathbb C}
\nc{\BQ}{\mathbb Q}
\nc{\BZ}{\mathbb Z}
\nc{\BN}{\mathbb N}
\nc{\BS}{\mathbb S}
\nc{\BA}{\mathbb A}
\nc{\holim}{\on{holim}}
\nc{\cS}{\mathcal S}
\nc{\cD}{\mathcal D}
\nc{\cE}{\mathcal E}
\nc{\BP}{\mathbb P}
\nc{\Hom}{\on{Hom}}
\nc{\wt}{\widetilde}
\nc{\vspan}{\on{span}}
\nc{\ord}{\on{ord}}
\nc{\im}{\on{im}}
\nc{\Mat}{\on{Mat}}
\nc{\can}{\on{can}}
\nc{\coker}{\on{coker}}
\nc{\ev}{\on{ev}}
\nc{\Tr}{\on{Tr}}
\nc{\End}{\on{End}}
\nc{\swap}{\on{swap}}
\nc{\Set}{\on{Set}}
\nc{\bC}{{\mathbf C}}
\nc{\bc}{{\mathbf c}}
\nc{\bD}{{\mathbf D}}
\nc{\bd}{{\mathbf d}}
\nc{\bE}{{\mathbf E}}
\nc{\be}{{\mathbf e}}
\nc{\bF}{{\mathbf F}}
\nc{\bff}{{\mathbf f}}
\nc{\CE}{\mathcal E}
\nc{\CO}{\mathcal O}
\nc{\CC}{\mathcal C}
\nc{\SC}{\mathscr C}
\nc{\SA}{\mathscr A}
\nc{\SB}{\mathscr B}
\nc{\adj}{\on{adj}}
\nc{\tensor}[3]{#1 \underset{#2}\otimes #3}
\nc{\Nat}{\on{Nat}}
\nc{\op}{\on{op}}
\nc{\Funct}{\on{Funct}}
\nc{\Ob}{\on{Ob}}
\nc{\fR}{\mathfrak{R}}
\nc{\Vect}{\on{Vect}}
\nc{\ns}{\on{non-spec}}
\nc{\ol}{\overline}
\nc{\ul}{\underline}
\nc{\univ}{\on{univ}}
\nc{\Maps}{\on{Maps}}
\nc{\bdd}{\on{bdd}}
\nc{\cont}{\on{cont}}
\nc{\Sym}{\on{Sym}}
\nc{\vol}{\on{vol}}
\nc{\supp}{\on{supp}}
\nc{\Lie}{\on{Lie}}
\nc{\master}{\on{master}}
\nc{\pt}{\on{pt}}
\nc{\bcd}{\[ \begin{tikzcd}}
\nc{\ecd}{\end{tikzcd} \]}
\nc{\funcon}{\on{Funct}_{\on{cont}}}
\nc{\funconpw}{\on{Funct}_{\on{cont},\on{pw-lin}}}
\nc{\ts}{\textsc}
\nc{\codim}{\on{codim}}
\nc{\fm}{\mathfrak m}
\nc{\SR}{\mathscr R}
\nc{\Tor}{\on{Tor}}
\nc{\Ext}{\on{Ext}}
\renc{\sc}[1]{\textsc{#1}}
\begin{document}

\maketitle
\tableofcontents

\section{Introduction}
Suppose a short-staffed collection of guards is patrolling a region---can their lack of manpower be exploited? Or can they devise a patrol pattern that stymies any would-be intruders? This is the general topic of pursuer-evader games. We will view this as a topological problem: at each time $t$, there is a patrolled region $P_t$ and an unpatrolled region $U_t$. By gluing together the copies of $U_t$ (with an appropriate subspace topology) we get a map of spaces $U \to [t_0, t_1]$, delineating how the unpatrolled region changes over time. Navigating through the unpatrolled region, then, is equivalent to finding a section to the map $U \to [t_0, t_1]$. 

More generally, given a map $X \xrightarrow{\pi} B$ of spaces, we would like to be able to compute the homotopy type of the space of sections of $\pi$, $\Gamma \pi = \{s: B \to X \ | \ \pi \circ s = \id_B \}$. The number of distinct sections, as searched for in the patroller-evader games, is $\pi_0(\Gamma\pi)$. These computations are well understood when $\pi$ is a fibration, but the non-fibrant case has received comparatively  little attention. 

\subsection{Prior work}
In the context of mobile sensor networks, this problem was introduced by de Silva and Ghrist in \cite{deSilvaGhristEvasionsFence}. They gave a necessary condition for existence of sections based on the homology of the covered region. Adams and Carlsson proved in \cite{EvasionAdamsCarlsson} that while zig-zag homology can provide a necessary condition, it cannot provide a sufficient condition for the existence of a section. Arone and Jin gave an extension of this in \cite{arone2021applying}, and, by using Goodwillie's calculus of functors, were able to detect the lack of a section in Adams-Carlsson's example. 

Ghrist and Krishnan introduced positive homology in \cite{MR3763757} as a tool which calculates directly the homology of the space of sections. 
When the map from the uncovered region to time is tame, Carlsson and Filippenko in \cite{TameFunctionsPaper} gave a complete computation of the set of connected components of the space of sections in terms of the homotopy groups of the fibers. 

\subsection{Paper outline}
In Section \ref{sec:holim} we give a computation of the homotopy groups of $\Gamma \pi$ for maps of spaces $X \to I$, where $I = [0, 1]$ when $\pi$ satisfies some mild lifting condition; the case $X \to S^1$ is very similar. 
%
The central idea is to subdivide $X \xrightarrow{\pi} I$ into well behaved projections to subintervals of $I$, where it is easier to compute the homotopy groups of the space of sections, and then stitch these together. Let 
\[ 0 = s_0 < s_1 < ... < s_n = 1 \]
be a sequence, so that $\{[s_i, s_{i+1}]\}_{i=0,...,n-1}$ is a partition of the interval. We will write $X_i := \pi^{-1}(s_i)$ for $0 \le i \le n$, and $X_i^{i+1} := \pi^{-1}([s_i, s_{i+1}])$ for $0 \le i \le n-1$. This gives us a diagram of spaces
\[ X_0 \to X_0^1 \leftarrow X_1 \to X_1^2 \leftarrow X_2 \to ... \leftarrow X_n, \]
which we will denote $ZX$. Any section $\gamma: I \to X$ of $\pi$ gives the data of: (1) a point in each $X_i$, and (2) a path in each $X_i^{i+1}$, compatible with the maps in $ZX$ in the obvious way.

On the other hand, the homotopy inverse limit of this diagram is given by 
\begin{enumerate}[label = (\arabic*)]
\item The choice of a point $x_i \in X_i$ for $0 \le i \le n$,
\item the choice of a point $x_i^{i+1} \in X_i^{i+1}$, for $0 \le i \le n-1$,
\item the choice of a path from the image of $x_i$ in $X_i^{i+1}$ to $x_i^{i+1}$, for $0 \le i \le n-1$,
\item and a path from the image of $x_i$ in $X_{i-1}^i$ to $x_{i-1}^i$, for $1 \le i \le n$. 
\end{enumerate}
Since we are interested only up to homotopy, we may combine the data of (3) and (4) as
\begin{enumerate}[label=(3')]
    \item the choice of a path in $X_i^{i+1}$ from the image of $x_i$ to the image of $x_{i+1}$, for $0 \le i \le n-1$. 
\end{enumerate}
There is therefore a map of spaces $\Gamma\pi \to \on{holim} ZX$. In Section \ref{sec:limit_vs_sec}, we develop conditions on the partition and the map $X \to I$ under which this map becomes an $n$-equivalence, for $n \in \BZ$. In such examples, we can then compute the low homotopy groups of $\Gamma\pi$ by studying the much more approachable $\on{holim} ZX$. 

In particular, the Bousfield-Kan spectral sequence for homotopy inverse limits (Ch. XI, Section 7 of \cite{bousfieldkan}) computes the homotopy groups of the homotopy inverse limit of $ZX$. 

\begin{thm}\label{thm:pi0seq}
Let $\{s_i\}_{i=0,...,n}$ be a partition of the interval, and $X \xrightarrow{\pi} I$ be locally 0-sectional relative to an open cover that contains $[s_i, s_{i+1}]$, and $ZX$ be as above. Then there is a short exact sequence of sets
\[ * \to R^1 \lim \pi_1(ZX) \to \pi_0 \Gamma\pi \to \lim \pi_0(ZX) \to *, \]
where $R^1\lim$ denotes the first right derived functor of the limit functor.
\end{thm}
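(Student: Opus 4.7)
My plan is to combine two ingredients. The first is the comparison theorem of Section~\ref{sec:limit_vs_sec}: the hypothesis that $\pi$ is locally 0-sectional relative to an open cover refining $\{[s_i,s_{i+1}]\}$ is precisely the condition under which the canonical map $\Gamma\pi \to \holim ZX$ constructed in the outline above induces a bijection on $\pi_0$. Granting this, the problem reduces to computing $\pi_0 \holim ZX$. The second ingredient is the Bousfield-Kan spectral sequence \cite{bousfieldkan} for the homotopy limit of $ZX$ over its zig-zag indexing category $\mathcal{I}$.

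The $E_2$ page of the BKSS reads $E_2^{p,q} = R^p\lim_\mathcal{I} \pi_q(ZX)$, abutting to $\pi_{q-p}\holim ZX$. The key observation is that the nerve of $\mathcal{I}$ is $1$-dimensional, so $R^p\lim_\mathcal{I} = 0$ for $p \geq 2$ on any coefficient system (whether group-valued or only pointed-set-valued). Consequently the only $E_2$ terms contributing to total degree $0$ are $E_2^{0,0} = \lim \pi_0(ZX)$ and $E_2^{1,1} = R^1\lim \pi_1(ZX)$, and the spectral sequence in this range collapses to give
\[ * \to R^1\lim \pi_1(ZX) \to \pi_0 \holim ZX \to \lim \pi_0(ZX) \to *, \]
which combined with the $\pi_0$-bijection above yields the claimed sequence.

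The main subtlety is interpreting the sequence correctly as pointed sets: since the groups $\pi_1$ in $ZX$ need not be abelian, $R^1\lim \pi_1$ is only a pointed set, and \emph{exactness} means that the map to $\lim \pi_0$ is surjective and the preimage of the basepoint is the image of the injective map from $R^1\lim\pi_1$. To sidestep any appeal to a nonabelian form of the BKSS, I would alternatively verify the sequence by computing $\holim ZX$ directly as an iterated homotopy pullback of the mapping paths $X_i \to X_i^{i+1} \leftarrow X_{i+1}$---legitimate since $\mathcal{I}$ admits no composable chains of length $\geq 2$ beyond identities---and reading the sequence off from the long exact fibration sequence attached to each homotopy pullback, then splicing while tracking basepoints. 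This direct approach also makes transparent the natural action of $\lim \pi_1(ZX)$ in the background, which one would otherwise need to address separately.
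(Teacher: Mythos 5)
Your strategy coincides with the paper's: first establish that the canonical map $\Gamma\pi \to \holim ZX$ is a $\pi_0$-bijection, then read off $\pi_0\holim ZX$ from the Bousfield--Kan spectral sequence, whose $R^p\lim$ terms vanish for $p\geq 2$ because the zigzag indexing category has a one-dimensional nerve. The spectral-sequence half of your argument is correct, including the nonabelian caveat; the paper records exactly this computation in Example~\ref{exmp:pi0_holim} and its preceding discussion.

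The gap is in the first half. You invoke ``the comparison theorem of Section~\ref{sec:limit_vs_sec}'' and assert that local $0$-sectionality relative to the cover is \emph{precisely} the condition for the $\pi_0$-bijection, but there is no such standalone theorem to cite. Theorem~\ref{thm:0_sectional} only gives criteria for a map over a \emph{single} subinterval $[a,b]$ to be $0$-sectional; it does not by itself produce the global comparison. Passing from levelwise $0$-sectionality on the pieces $[s_i,s_{i+1}]$ to a $\pi_0$-bijection between the full section space and the full homotopy limit is the actual content of the paper's proof of this theorem, and your proposal leaves it unargued. The paper supplies it by writing $\Gamma\pi$ as the strict limit of the refined zigzag
\[
\cdots \to \pi^{-1}(s_i) \leftarrow \Gamma\pi|_{[s_i,s_{i+1}]} \to \pi^{-1}(s_{i+1}) \leftarrow \cdots,
\]
writing $\holim ZX$ as the strict limit of the analogous zigzag with the relative path spaces $\Hom\bigl(([s_i,s_{i+1}],s_i,s_{i+1}),(\pi^{-1}[s_i,s_{i+1}],\pi^{-1}(s_i),\pi^{-1}(s_{i+1}))\bigr)$ in the middle slots, and observing that the natural map between the two diagrams is the identity on the $\pi^{-1}(s_j)$ and, by the local $0$-sectional hypothesis, a $\pi_0$-isomorphism on each middle slot; the conclusion is then that this persists to the limits. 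Your proposed alternative (replacing the spectral sequence by an iterated homotopy pullback computation of $\holim ZX$) is a reasonable way to avoid nonabelian-BKSS worries and matches the informal description in the introduction, but it concerns only the $\holim ZX$ side and does not supply the missing comparison step.
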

We prove this in Section \ref{sec:limit_vs_sec}. 

For diagrams of the same form as $ZX$, we can describe $R^1\lim \pi_1(ZX)$ as the orbit set of a group action of $\prod_0^n \pi_1(X_i)$ on $\prod_0^{n-1} \pi_1(X_i^{i+1})$, despite the fact that the $\pi_1(X_i)$ need not be abelian.
\begin{rem}\label{rem:bpt_groupoid}
We are working without basepoints here, so the use of $\pi_1(ZX)$ demands some explanation. Its meaning here is closer to that of the fundamental groupoid than a true fundamental group. The ``valid" extensions associated to each sequential path space $\gamma \in \lim \pi_0(ZX)$ are $R^1\lim \pi_1(ZX, \gamma)$, where we travel along $\gamma$ to define the isomorphism in
\[ \pi_1(X_i, \gamma(i)) \to \pi_1(X_i^{i+1}, \gamma(i)) \cong \pi_1(X_i^{i+1}, \gamma(i+1)) \leftarrow \pi_1(X_{i+1}, \gamma(i+1). \]
Unfortunately, since this level of the spectral sequence deals only with pointed sets, the computations must be done pointwise over the elements of $\lim \pi_0(ZX)$. Of course, if all we care about is whether $\pi_0 \Gamma$ is empty, no computations of $R^1 \lim \pi_1(ZX)$ are necessary. 
\end{rem}

The condition of being ``locally 0-sectional" essentially means that the section space really is the same as the piecewise path space constructed by the homotopy inverse limit--or at least, it is similar enough to correctly compute $\pi_0$. We give a full discussion in Section \ref{sec:limit_vs_sec}. 

Section \ref{sec:tame} is devoted to tame maps of manifolds (Definition \ref{dfn:tamefunction}), similar to Morse functions. We use the smooth structure to show that tame maps are locally 0-sectional, so that their section space can be computed as in Theorem \ref{thm:0_sectional}. This section is drawn from \cite{TameFunctionsPaper}, which gives an explicit computation using the structure of tame maps. This section was particularly due to the second author. 

Section \ref{sec:sensor_balls} is devoted to mobile sensor networks. Sensor ball evasion has been studied several times in the past, after being introduction by de Silva and Ghrist in \cite{deSilvaGhristEvasionsFence}. In this game, sensors wander through a region, observing certain areas and noticing when they overlap with other sensors. We want to know when there is a path through the uncovered region. 

We provide a new calculation of the path components of the uncovered region, using Theorem \ref{thm:pi0seq}. This relies on the computation of unstable homotopy groups of the fiber spaces, however a necessary condition can be extracted from just the homology calculations: there is an easy dictionary between $\pi_0$ and $H_0$, so we can restate $\lim \pi_0(ZX)$ in terms of the cohomolology groups of the uncovered region, which Alexander duality allows us to turn into a question about the homology groups of the covered region. This allows us to provide a new necessary and sufficient condition for the existence of a section, in terms of the fiberwise homology of the covered region.

\section{Holim and spaces of sections}\label{sec:holim}
We suggested above that the homotopy inverse limit of diagrams is relatively computable, and can be used as a good approximation of the space of sections of a map $X \xrightarrow{\pi} I$. This makes it a natural candidate for helping us understand $\Gamma \pi$. To this end, we first give a brief overview of homotopy inverse limits in Section \ref{subsec:intro_to_holim}. 

Section \ref{subsec:path_spaces} details how to go from a map of spaces $X \xrightarrow{\pi} B$ to a diagram whose homotopy inverse limit is a good approximation for $\Gamma\pi$. There is no extra difficulty in working over general spaces $B$, so we work in increased generality there.

Section \ref{subsec:BK_SS} then records the results of the Bousfield-Kan spectral sequence when applied to the diagrams from Section \ref{subsec:path_spaces}. In particular, we show it collapses on the $E_2$ page for spaces $X\xrightarrow{\pi} I$ parameterized over the interval. 

Section \ref{sec:limit_vs_sec} addresses the question: how good an approximation is this homotopy inverse limit to the space of sections? We write down some lifting diagrams which make the natural map $\Gamma\pi \to \on{holim} ZX$ an $n$-equivalence, and then deduce Theorem \ref{thm:0_sectional}. 

\subsection{Homotopy inverse limits}\label{subsec:intro_to_holim}
There are many constructions of the homotopy inverse limit. We will follow \cite{bousfieldkan}, which essentially writes limits as a Hom set in the category of diagrams, and then takes a cofibrant replacement of the source. A somewhat more common construction starts the same, but takes a fibrant replacement of the target rather than a cofibrant replacement of the source. 

More precisely, let $D$ be a small category, $F: D \to \sc{Spaces}$ a diagram indexed by $D$. The functor category $\sc{Funct}_D := \Hom_{\sc{Cat}}(D, \sc{Spaces})$ is enriched over $\sc{Spaces}$, either via the compact open topology when $\sc{Spaces}$ is some topological category, or by setting
\[ \on{Maps}_{\sc{Funct}_D}(F, G)_n := \Hom(\Delta^n \times F, G), \]
when $\sc{Spaces}$ is the category of simplicial sets. Then one definition of the inverse limit of the diagram $F$ is
\[ \lim_{\leftarrow} F := \on{Maps}_{\sc{Funct}_D}(*_D, F),  \]
where $*_D: D \to \sc{Spaces}$ is the functor which sends every object of $D$ to a terminal object of $\sc{Spaces}$. 

The homotopy inverse limit mimics this definition, after taking a cofibrant replacement of $*_D$. For $d$ an object of $D$, we define a space $D/d$, whose $n$-simpleces are sequences
\[ d_n \xrightarrow{\alpha_{n-1}} d_{n-1} \to ... \xrightarrow{\alpha_0} d_0 \xrightarrow{\alpha} d \]
in $D$. Given a map $d \xrightarrow{\phi} d'$ in $D$, there is a map $D/d \to D/d'$ which takes 
\[ (d_n \xrightarrow{\alpha_{n-1}} d_{n-1} \to ... \xrightarrow{\alpha_0} d_0 \xrightarrow{\alpha} d) \mapsto (d_n \xrightarrow{\alpha_{n-1}} d_{n-1} \to ... \xrightarrow{\alpha_0} d_0 \xrightarrow{\phi \circ \alpha} d'). \]
This assembles to a functor
\[ D/-: D \to \sc{Spaces}, \]
which will be our cofibrant replacement. We can now define the homotopy inverse limit,
\begin{defn}
The homotopy inverse limit of $F: D \to \sc{Spaces}$ is the mapping space
\[ \on{holim} F := \on{Maps}_{\on{Funct}_D}(D/-, F). \]
\end{defn}

\subsection{Path spaces}\label{subsec:path_spaces}
Let $X \xrightarrow{\pi} B$ be a map of simplicial sets. We want a nice way to organize the data of $X$ into a functor that we can analyze, similar to $ZX$ in the introduction. To this end, consider the diagram $\wt{B}$, whose objects are the simplices of $B$, and whose maps are inclusions of simplices. Given a simplicial set $S$, let $H(S)$ denote its barycentric subdivision. Then $\wt{B}/-$ assigns to each $n$-simplex of $B$ the barycentric subdivision of $\Delta^n$, $H(\Delta^n)$, and sends the inclusion maps $\Delta^m \hookrightarrow \Delta^n$ to the corresponding inclusions of their barycentric subdivisions. 

Let $\wt{\pi}: \wt{B} \to \sc{Spaces}$ denote the functor which assigns to the each simplex $\sigma$ of $B$ the barycentric subdivision of its preimage $H(\pi^{-1}(\sigma)) \subseteq H(X)$. Then $\on{holim}_{\wt{B}} \wt{\pi}$ is a collection of path data: we have a map $H(\Delta^n)$ to $\pi^{-1}(\sigma)$, and the collection of all of this path data glues together well. Thus $\on{holim} \wt{\pi}$ is a reasonable first approximation to the space of sections of $\pi$. 

When $B$ is not a simplicial set, we can take a simplicial decomposition of $B$, call it $S$, and a similar procedure applies. Let $i: |S| \to B$ be the decomposition relationship between $S$ and $B$, then we get a diagram $\wt{\pi}_S: \wt{S} \to \sc{Spaces}$, by assigning to each simplex $\sigma$ of $S$ the space $\pi^{-1}(i(\sigma))$.

%

\begin{figure}[h]
\centering
\includegraphics{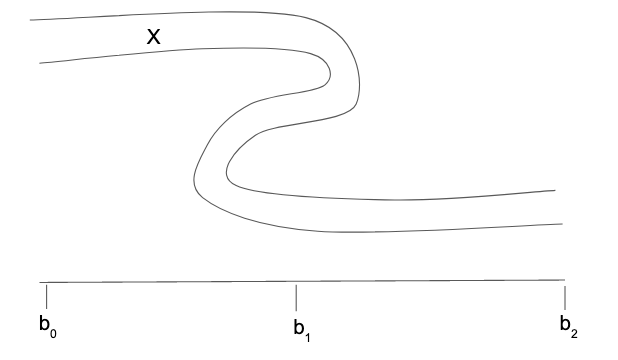}
\caption{$X$ projects down onto $[b_0, b_2]$}
\label{fig:X_proj}
\end{figure}
\begin{exmp}
In Figure \ref{fig:X_proj}, let $X \xrightarrow{\pi} B$ be the projection map. The map $\pi$ clearly has no sections. However, if we used a single 1-simplex $[b_0, b_2]$ to approximate the base the homotopy limit would be the space of paths in $X$ that map the endpoints to the preimage of $b_0$ and $b_2$, which is certainly not empty! Subdividing $B$ further, though, yields better results. 

If we approximate $B$ as the union of two 1-simpleces,  $[b_0, b_1]$ and $[b_1, b_2]$, then $\holim \wt{\pi}$ contains collections of data: a point in the preimage of $b_0$, $b_1$, and $b_2$, and paths in the preimage of $[b_0, b_1]$ and $[b_1, b_2$ that connect these points. But clearly no such points and paths exist, so 
\[ \holim \wt{\pi} \cong \emptyset \cong \Gamma \pi. \]
\end{exmp}

Finer simplicial approximations  in general result in more accurate approximations of the section space. The idea is that, over a smaller space, there are fewer obstructions to a path being homotopic to a section. 

We collect these observations here:
\begin{prop}\label{prop:subdivisions}
Let $X \xrightarrow{\pi} B$ be a map of topological spaces, and let $S$ and $S'$ be simplicial approximations of $B$. 
\begin{enumerate}[label = (\arabic*)]
    \item There is a natural map $\Gamma \pi \to \holim \wt{\pi}_S$.
    \item If $S'$ is a finer approximation of $B$ than $S$ (for instance, if $S' = H(S)$), then the map from (1) factors as
    \[ \Gamma \pi \to \holim \wt{\pi}_{S'} \to \holim \wt{\pi}_S. \]
    \item Let $I$ denote the partially ordered set of simplicial approximations of $B$, ordered by fineness. Then there is a natural map
\[ \Gamma X \to \lim_{S \in I} \holim \wt{\pi}_S. \]
\end{enumerate}
\end{prop}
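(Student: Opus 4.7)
The plan is to construct all three maps directly using the description of the homotopy inverse limit as the space of natural transformations $\on{Maps}_{\on{Funct}_{\wt{S}}}(\wt{S}/-, \wt{\pi}_S)$ from Section \ref{subsec:intro_to_holim}.

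For (1), I would produce the required natural transformation pointwise. Given a section $s \in \Gamma\pi$, for each simplex $\sigma$ of $S$, the object $\wt{S}/\sigma$ is the nerve of the overcategory, canonically the barycentric subdivision $H(\Delta^{\dim \sigma})$, and admits a canonical collapse map $c_\sigma: \wt{S}/\sigma \to \sigma \hookrightarrow |S| \xrightarrow{i} B$. The composition $s \circ c_\sigma$ lands in $\pi^{-1}(i(\sigma)) = \wt{\pi}_S(\sigma)$, and the family $\{s \circ c_\sigma\}_\sigma$ is natural in inclusions of simplices by functoriality of $c$. Since the assignment is continuous in $s$, this would define the map $\Gamma\pi \to \holim \wt{\pi}_S$.

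For (2), a refinement $S'$ of $S$ yields a functor $f : \wt{S'} \to \wt{S}$ sending $\sigma'$ to the minimal simplex of $S$ containing $i'(\sigma')$, together with a natural inclusion $\wt{\pi}_{S'} \hookrightarrow f^*\wt{\pi}_S$ arising from $\pi^{-1}(i'(\sigma')) \subseteq \pi^{-1}(i(f(\sigma')))$. Combined with the standard pullback $\holim_{\wt{S}} \wt{\pi}_S \to \holim_{\wt{S'}} f^*\wt{\pi}_S$, this would produce a zigzag
\[
\holim_{\wt{S'}} \wt{\pi}_{S'} \to \holim_{\wt{S'}} f^*\wt{\pi}_S \leftarrow \holim_{\wt{S}} \wt{\pi}_S,
\]
in which I would argue the backward arrow is a weak equivalence by cofinality: for each $\sigma \in \wt{S}$, the slice $\sigma \downarrow f$ is the poset of simplices of $S'$ whose minimal enclosing $S$-simplex contains $\sigma$, and its nerve deformation retracts onto a subdivision of $|\sigma|$, hence is contractible. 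Inverting this equivalence yields the desired map $\holim \wt{\pi}_{S'} \to \holim \wt{\pi}_S$. Commutativity of the triangle with the map from (1) would then reduce to the observation that both paths send a section $s$ to the family of restrictions $\{s|_\sigma\}_{\sigma \in S}$, which is the same whether computed directly on $S$ or assembled from restrictions to the smaller simplices of $S'$.

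For (3), the poset $I$ of simplicial approximations ordered by refinement is directed (any two approximations admit a common refinement, e.g.\ via joint barycentric subdivision), so part (2) makes $\{\holim \wt{\pi}_S\}_{S \in I}$ into an inverse system, and the maps from (1) assemble into a compatible cone; the universal property of the inverse limit then supplies the required map $\Gamma\pi \to \lim_{S \in I} \holim \wt{\pi}_S$. The main technical obstacle will be the cofinality check in (2): for arbitrary refinements, not only the barycentric case $S' = H(S)$, one must carefully verify that each slice $\sigma \downarrow f$ is contractible, which is essentially a triangulation-invariance argument and the only step with real content beyond unwinding definitions.
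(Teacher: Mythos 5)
Your construction for (1) is essentially the paper's intended argument: restrict a section over each simplex to get a compatible family of maps, i.e.\ an element of $\Maps_{\Funct_{\wt S}}(\wt S/-,\wt\pi_S)$. That part is fine.

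The gap is in (2), and it propagates into (3). The paper needs a \emph{genuine} map $\holim\wt\pi_{S'}\to\holim\wt\pi_S$, compatible on the nose with the maps from $\Gamma\pi$, because (3) takes an honest inverse limit over the poset of triangulations and invokes its universal property. Your zigzag
\[
\holim_{\wt S'}\wt\pi_{S'}\;\longrightarrow\;\holim_{\wt S'}f^*\wt\pi_S\;\longleftarrow\;\holim_{\wt S}\wt\pi_S
\]
followed by ``inverting the equivalence'' only produces a map in the homotopy category; even granting the cofinality claim, the backward restriction map is a weak equivalence, not a homeomorphism, so there is no canonical point-set inverse. Consequently $\{\holim\wt\pi_S\}_{S\in I}$ does not literally become an inverse system of spaces and the compatible cone needed for (3) is not supplied. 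This overcomplication is unnecessary: the map in (2) should be constructed directly, in the same spirit as (1). An element of $\holim\wt\pi_{S'}$ is a compatible family of maps $\psi_{\sigma'}\colon |\wt S'/\sigma'|\cong|\sigma'|\to\wt\pi_{S'}(\sigma')\subseteq\wt\pi_S(\sigma)$ for each $\sigma'\subseteq\sigma$; for a fixed $\sigma\in S$ these glue over the faces (by naturality in $\wt S'$) to a single continuous map $|\sigma|\cong|\wt S/\sigma|\to\wt\pi_S(\sigma)$, and this assignment is itself natural in $\sigma$. That gives an actual map $\holim\wt\pi_{S'}\to\holim\wt\pi_S$, it manifestly commutes with the restriction maps from $\Gamma\pi$, and (3) follows by the universal property. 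As a secondary point, even if one did want to run a cofinality argument, the comma category relevant to homotopy \emph{limits} along $f\colon\wt S'\to\wt S$ is $f\downarrow\sigma=\{\sigma'\,:\,f(\sigma')\subseteq\sigma\}$, i.e.\ the $S'$-simplices subdividing $\sigma$ (whose nerve realizes to $|\sigma|$ and so is contractible), not the $\sigma\downarrow f$ you wrote, which is the condition one would use for homotopy colimits.
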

Points (1) and (2) follow directly from our description of the homotopy inverse limit in this scenario. (3) follows from (1) and (2).

\subsection{Homotopy type of the sequential path space}\label{subsec:BK_SS}
Using the homotopy limit as an approximation to the space of sections would be silly if the approximation didn't have some nice, computable features. This section gives those computation results, mostly referring to the work done in \cite{bousfieldkan}. We will focus on the case where $B$ is an interval, as it simplifies our eventual spectral sequence, and is the primary case of interest anyway. When $B$ is higher dimensional, the spectral sequence still exists---it just no longer collapses on the second page. 

The Bousfield-Kan spectral sequence for homotopy inverse limits computes the homotopy groups of a homotopy inverse limit in terms of the homotopy groups of its constituent parts. The reader interested in the details should look to Chapter XI of \cite{bousfieldkan}. Given a diagram of spaces $F \in \sc{Spaces}^D_*$, this has 
\[ E_2^{s,t} \cong R^s\lim \pi_t(F), \]
where $R^s\lim$ denotes the $s$ right derived functor of inverse limit. 

The differential $d_r: E_r^{s,t} \to E_r^{s+r, t+r-1}$, and the spectral sequence converges so that
\[ \on{gr} \pi_i \on{holim} D \cong \oplus_s E_\infty^{s-i, s}. \]
Applied to $ZX$, note that $R^s\lim ZX$ vanishes for $s \ge 2$ since there are no nondegenerate 2-simplices in the underlying diagram of $ZX$ (XI.6.5 in \cite{bousfieldkan} and the cosimplicial Dold-Kan). Thus our spectral sequence only has two nonzero columns, and collapses on the $E_2$ page. 

\begin{rem}\label{rem:bpt_ZX}
A word should be said on base points: our diagrams $ZX$ cannot have a compatible collection of basepoints on the nose, since the image of the inclusions $X_i, X_{i+1} \to X_i^{i+1}$ do not intersect. Not all is lost, however: fix a section $\sigma: I \to X$. This sets a base points in each of the $X_i$, along with a canonical compatibility between these base points. In this way, we should say that all homotopy groups are relative to a given section: we get a corresponding spectral sequence for each choice. 
\end{rem}

\begin{exmp}\label{exmp:pi0_holim}
If we want to compute $\pi_0 \on{holim} ZX$, it will fit into a ``short exact sequence" of sets
\[ * \to R^1\lim \pi_1(ZX) \to \pi_0 \on{holim} ZX \to  \lim \pi_0(ZX) \to *, \]
by which we mean the fiber of $\pi_0\holim ZX \to \lim \pi_0(ZX)$ over $\gamma \in \lim \pi_0(ZX)$ is isomorphic to $R^1\lim \pi_1(ZX, \gamma)$. 

Then $R^1\lim \pi_1(ZX)$ is the coequalizer of the two maps
\begin{align*}  \prod_{i=0}^n &\pi_1(X_i) \times \prod_{i=0}^{n-1} \pi_1(X_i^{i+1})  \rightrightarrows \\ &\prod_{\alpha: X_i \to X_i^{i+1}} \pi_1(X_{i}^{i+1}) \times \prod_{\beta: X_{i+1} \to X_i^{i+1}} \pi_1(X_i^{i+1}) \times \prod_{i=0}^n \pi_1(X_i) \times \prod_{i=0}^{n-1} \pi_1(X_i^{i+1}). \end{align*}
The first map is induced by $\alpha_*: \pi_1(X_i) \to \pi_1(X_i^{i+1})$, $\beta_*: \pi_1(X_{i+1}) \to \pi_1(X_i^{i+1})$, and identity maps on the remaining factors. The second map is given only by identity maps on the last two factors of the product.
\end{exmp}

\subsection{Comparing the homotopy inverse limit and the space of sections}\label{sec:limit_vs_sec}
Now tooled with a calculation of $\pi_* \holim ZX$, we would like to leverage the map from Proposition \ref{prop:subdivisions},
\[ \Gamma\pi \to \on{holim} ZX, \]
to gain some insight into the homotopy groups of $\Gamma\pi$. In particular: when is this map an $n$-equivalence?

Our general strategy is to investigate the minimal hypotheses we need to get the low dimensional parts of the long exact sequence of a fibration. The homotopy inverse limit of
\[ X_0 \xrightarrow{i} X \xleftarrow{j} X_1 \]
is the space of paths $\gamma: I \to X$, with $\gamma(0) \in i(X_0), \gamma(1) \in j(X_1)$. (In our setting, $i$ and $j$ will always be inclusions, and will often be suppressed from the notation.) We will denote this space by $\on{Hom}((I, 0, 1), (X, X_0, X_1))$. Let $\on{Sec}([a,b], X)$ denote the space of sections of $X \xrightarrow{p} [a,b]$. Then for each choice of homeomorphism $(I, 0, 1) \xrightarrow{f} ([a, b], a, b)$, there is a pullback diagram
\bcd 
\on{Sec}([a,b], X) \ar[r, "f^*"] \ar[d] & \on{Hom}((I, 0, 1) (X, X_0, X_1)) \ar[d, "p_*"] \\
* \ar[r, "* \mapsto f"] & \Hom((I, 0, 1), ([a,b], a, b)). 
\ecd 
\begin{exmp}
If $p$ is a fibration, then so is 
\[p_*: \Hom((I, 0, 1), (X, X_0, X_1)) \to \Hom((I, 0, 1), ([a,b], a, b)).\]
Now, $\Hom((I, 0, 1), ([a, b], a, b))$ is contractible, so if $p_*$ is a fibration, then
\[\on{Sec}([a,b], X) \to \Hom(I, X; \partial I, \partial X)\]
is a weak homotopy equivalence.
\end{exmp}

It is too much to hope in applications of interest that $f^*$ will always be a weak homotopy equivalence. However, in applications we are particularly interested in the first few homotopy groups of $\on{Sec}([a,b], X)$. In this case, we need not require that $p_*$ be a fibration.

\begin{defn}\label{defn:sectional}
Let $p: X \to [a, b]$ be a continuous map. We will say that $p: X \to U$ is \emph{$0$-sectional} if the map $\on{Sec}([a,b], X) \xrightarrow{i} \Hom(([a, b], a, b), (X, p^{-1}(a), p^{-1}(b)))$ is $0$-connected for each choice of basepoint $\sigma \in \on{Sec}([a, b], X)$.
\end{defn}
\begin{defn}\label{defn:locally_sectional}
We will say $p: X \to I$ is \emph{locally $0$-sectional} if for every $t \in I$, there is a neighborhood $(a, b) \ni t$ such that $p|_{[a,b]}$ is $0$-sectional. If $\mathcal{U} = \{U_i\}_{i \in \alpha}$ is an open cover of $I$, we will say $p$ is locally $0$-sectional subordinate to $\mathcal{U}$ if $p|_{p^{-1}(\ol{U_i})}$ is $0$-sectional for each $U_i \in \mathcal{U}$. 
\end{defn}

With the terminology all defined, we're now ready to prove Theorem \ref{thm:pi0seq}. 
\begin{proof}[{\bf Proof of Theorem \ref{thm:pi0seq}}]
By Proposition \ref{prop:subdivisions} (1), there is a natural map from the space of sections to the homotopy inverse limit, $\Gamma \pi \to \on{holim} \wt{\pi}_{\cup [s_i, s_{i+1}]}$. The Bousfield-Kan spectral sequence gives a computation of $\pi_0\on{holim} \wt{\pi}_{\cup [s_i, s_{i+1}]}$ which matches that claimed in the theorem, so it suffices to show that
\[ \pi_0 \Gamma \pi \to \pi_0 \on{holim} \wt{\pi}_{\cup [s_i, s_{i+1}]} \]
is an isomorphism. 

The condition that $X \xrightarrow{\pi} I$ is locally 0-sectional subordinate to an open cover containing $[s_i, s_{i+1}]$ means that each of the maps
\[ \Gamma \pi|_{[s_i, s_{i+1}]} \to \Hom(([s_i, s_{i+1}], s_i, s_{i+1}), (\pi^{-1}([s_i, s_{i+1}]), \pi^{-1}(s_i), \pi^{-1}(s_{i+1})) \]
induce isomorphisms on $\pi_0$. 

Now elements of $\Gamma \pi$ are the same as elements of the limit
\[ \lim_{\leftarrow} (... \to \pi^{-1}(s_{i-1}) \leftarrow \Gamma \pi|_{[s_{i-1}, s_i]} \to \pi^{-1}(s_i) \leftarrow \Gamma\pi|_{[s_i, s_{i+1}]} \to \pi^{-1}(s_{i+1}) \leftarrow ...) \]
Likewise, elements of the homotopy inverse limit over ZX are the same as elements of the (normal) limit over the diagram
\[ ... \to \pi^{-1}(s_{i-1}) \leftarrow \Hom(([s_{i-1}, s_i], s_{i-1}, s_i), (\pi^{-1}([s_{i-1}, s_{i}]), \pi^{-1}(s_{i-1}), \pi^{-1}(s_{i})) \to \pi^{-1}(s_i) \leftarrow ... \]
Let $A_i := \Hom(([s_{i-1}, s_i], s_{i-1}, s_i), (\pi^{-1}([s_{i-1}, s_{i}])$, for page space. The induced map of diagrams
\bcd 
... \ar[r] & \pi^{-1}(s_{i-1}) \ar[d, "\cong"] & \Gamma \pi|_{[s_{i-1}, s_i]} \ar[l] \ar[d] \ar[r] & \pi^{-1}(s_i) \ar[d, "\cong"] & ... \ar[l] \\
... \ar[r] & \pi^{-1}(s_{i-1}) & A_i \ar[l] \ar[r] & \pi^{-1}(s_i) & ... \ar[l] 
\ecd 
The middle map also induces an isomorphism on connected components by hypothesis, and this property is preserved under taking limits. 
\end{proof}

Let's abstract, and suppose we have
\bcd 
F \ar[r, "i"] \ar[d] & E \ar[d, "p"] \\
* \ar[r] & B,
\ecd 
Note: we are \emph{not} assuming $p$ is a fibration.

\subsubsection{0-sectional maps}\label{subsec:pi0} There is a sequence
\begin{equation}\label{pi_0_case} \pi_0 F \xrightarrow{\pi_0(i)} \pi_0 E \xrightarrow{\pi_0(p)} \pi_0 B. \end{equation}
The map $\pi_0(p)$ is certainly an epi if $E \to B$ is surjective, which is true in our case whenever $X$ contains a path connected component whose map to $I$ is surjective. The composite of (\ref{pi_0_case}) is automatically 0. To be exact, everything sent to the basepoint of $\pi_0 B$ must be homotopic to an element of the image of $F \to E$. 

In our setting, that means that each map $I \to X$ is homotopic to a section. (Technically we should specify that the pullback under $f$ is homotopic to a section, but the choice of $f$ doesn't matter.) We can state this condition by requiring that in the following diagram, there is a lift
\bcd 
I \ar[r] \ar[d, "\on{id} \times 0"] & X \ar[d, "p"] \\
I \times I \ar[ur, dashed, "H"] \ar[r, "h"] & {[a, b]}
\ecd 
where $H(0, t), H(1, t)$ are constant and $h$ is any homotopy. This gives us surjectivity of the map from sections to paths. The diagram we must require for injectivity is similar; we summarise the results in the following theorem: 
\begin{thm}\label{thm:0_sectional}
A map $p: X \to [a,b]$ is 0-sectional if
\begin{enumerate} 
\item The map $\pi_0\on{Sec}([a, b], X) \to \pi_0\Hom(([a,b], a, b), (X, p^{-1}(a), p^{-1}(b)))$ is surjective, i.e. there is a lift
\bcd 
I \ar[r] \ar[d, "\on{id} \times 0"] & X \ar[d, "p"] \\
I \times I \ar[ur, dashed, "H"] \ar[r, "h"] & {[a, b]}
\ecd 
where $H(0, t), H(1, t)$ are constant whenever $h(1, s) = f^*$; and
\item The map $\pi_0\on{Sec}([a, b], X) \to \pi_0\Hom(([a,b], a, b), (X, p^{-1}(a), p^{-1}(b)))$ is injective, i.e. there is a lift
\bcd 
I \times I \ar[r] \ar[d] & X \ar[d, "p"] \\
I \times I \times I \ar[ur, dashed, "H"] \ar[r, "h"] & {[a, b]},
\ecd 
when $h|_{(s, t, 1)}: I \times I \to [a, b]$ is equal to $f \circ \pi_1$, where $\pi_1: I \times I \to I$ is projection onto the first factor. 
\end{enumerate}
\end{thm}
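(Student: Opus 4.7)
The plan is to split 0-sectionality into surjectivity and injectivity on $\pi_0$ of the comparison map $\on{Sec}([a,b], X) \to \Hom(([a,b],a,b),(X, p^{-1}(a), p^{-1}(b)))$, and to show that these are respectively equivalent to conditions (1) and (2). In each case the argument is a direct translation: given a representative of the relevant homotopy class in the mapping space of triples, produce a homotopy $h$ in the base $[a,b]$ whose terminal slice factors through the homeomorphism $f$, then apply the corresponding lifting hypothesis to produce a compatible homotopy $H$ in $X$ whose terminal slice lives in the space of sections.

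For surjectivity, I start from an arbitrary triple-map $\gamma \colon ([a,b],a,b) \to (X, p^{-1}(a), p^{-1}(b))$. Its projection $p\circ\gamma$ is a continuous self-map of $[a,b]$ fixing the endpoints, so a straight-line interpolation in $[a,b]$ provides $h \colon I \times I \to [a,b]$ from $p\circ\gamma$ at $t=0$ to $f$ at $t=1$, with $h(0,t)=a$ and $h(1,t)=b$ throughout. Applying (1) with starting data $\gamma$ yields $H \colon I \times I \to X$ such that $p\circ H(-,1) = f$; thus $H(-,1)$ is the pullback of a section, and $H$ itself exhibits the class of $\gamma$ as lying in the image of the comparison map.

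For injectivity, I start from two sections $\sigma_0,\sigma_1$ and a triple-homotopy $K \colon I \times I \to X$ between their pullbacks $f^*\sigma_0, f^*\sigma_1$, so that $p\circ K$ agrees with $f$ on the faces $t=0,1$ and is constant at $a,b$ on the faces $s=0,1$. I linearly interpolate in $[a,b]$ to construct $h \colon I \times I \times I \to [a,b]$ from $p\circ K$ at $u=0$ to $f\circ\pi_1$ at $u=1$, keeping the boundary data constant in $u$ on the four side faces. Condition (2) supplies a lift $H$, and after the identifications on the top face $u=1$, the map $H(-,-,1)$ becomes a homotopy from $\sigma_0$ to $\sigma_1$ through sections, witnessing equality of classes in $\pi_0\on{Sec}([a,b], X)$.

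The main obstacle is the bookkeeping on the cube in part (2): one must ensure that the lift $H$ not only satisfies $p\circ H = h$, but also remains in $p^{-1}(a),p^{-1}(b)$ on the faces $s=0,1$ and equals $\sigma_0,\sigma_1$ on the faces $t=0,1$, so that $H(-,-,1)$ is genuinely a homotopy of sections rather than a homotopy of triples. The cleanest way to enforce this is to read (2) as a lifting problem relative to these side faces (compatible with the $I \times I \to X$ input already given in the hypothesis); equivalently, one can invoke (1) inductively on the lower-dimensional faces of the cube to absorb the boundary constraints into the bulk homotopy.
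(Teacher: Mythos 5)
Your proposal is correct and follows the same route as the paper, which disposes of this theorem in a single sentence after the diagrams have been set up: the first diagram was derived in the preceding discussion as exactly the surjectivity condition on $\pi_0$, and the second is read as ``homotopic sections are fiberwise homotopic.'' Your unpacking of both directions is a faithful, more explicit version of that.

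One small remark on the final paragraph: the ``main obstacle'' you flag largely evaporates once you use that the lift satisfies $p \circ H = h$ together with your choice of $h$. On the faces $s=0,1$ you have $h \equiv a$ and $h \equiv b$, so $H$ automatically stays in $p^{-1}(a)$ and $p^{-1}(b)$ there. On the faces $t=0,1$ you do not get $H(s,0,u) = f^*\sigma_0(s)$ on the nose, but you don't need it: since $h(s,0,u) = f(s)$ is constant in $u$, the slice $H(-,0,u)$ is a path through pullbacks of sections from $f^*\sigma_0$ to $H(-,0,1)$, and similarly at $t=1$. Concatenating $H(-,0,\cdot)^{-1}$, $H(-,-,1)$, and $H(-,1,\cdot)$ and reparametrizing by $f^{-1}$ gives the desired homotopy of sections. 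So the ``invoke (1) inductively'' fallback is unnecessary, and the plain reading of (2) suffices.
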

We have already justified the first diagram; the second diagram just says that sections which are homotopic are also fiberwise homotopic. 

We could hope to get a nicer theorem statement by reproducing the proof of the long exact sequence of a fibration, and seeing what diagrams are required for the low dimensional parts of the sequence to hold. Suppose we wanted exactness of $\pi_1 B \to \pi_0 F \to \pi_0 E$, for instance. Here exactness means that $\pi_1B$ acts on $\pi_0 F$, and the quotient by this action includes into $\pi_0E$. 

For $\pi_1(B)$ to have an action on $\pi_0 F$, we need to be able to form a lift as indicated in the diagram
\bcd 
* \ar[r] \ar[d] & E \ar[d] \\
I \ar[r, "\sigma"] \ar[ur, dashed, "\wt{\sigma}"] & B. 
\ecd 
The action is well defined if homotopies of paths also lift: that is, if the dashed arrow exists as in the following diagram for any square
\bcd
I \ar[r, "\wt{\sigma}"] & E \ar[d] \\
I \times I \ar[u, leftarrow, "\on{id} \times 0"] \ar[ur, dashed] \ar[r, "H"] & B.
\ecd 

In our setting, we can use the product/hom adjunction to get an equivalence between the diagram
\bcd 
* \ar[r] \ar[d] & \Hom((I, 0, 1), (X, p^{-1}(a), p^{-1}(b))) \ar[d] \\
I  \ar[r] \ar[ur, dashed] & \Hom((I, 0, 1), ({[a,b]}, a, b))
\ecd 
and the diagram
\bcd 
I \ar[r] \ar[d] & X \ar[d] \\
I \times I \ar[r] \ar[ur, dashed] & {[a,b]},
\ecd 
with the understanding that maps preserve $\partial I$ and $\partial X$. Likewise, the action being well defined  translates to existence of a dashed arrow in the diagram
\bcd 
I \times I \ar[r] \ar[d] & X \ar[d] \\
I \times I \times I \ar[r] \ar[ur, dashed] & {[a,b]}.
\ecd 
These are very nice requirements, which unfortunately often will not hold in the cases in which we are interested: for instance, lifts (for either diagram) need not exist in the case of a manifold mapping to the interval by a tame function with a unique critical point. (See Definition \ref{dfn:tamefunction}.)

\section{Application: Tame functions}\label{sec:tame}
We now give some applications, to show that locally $n$-sectional maps really do show up in nature. This section is devoted to tame functions, a kind of smooth function similar to Morse functions. We construct a ``nice" vector field on a tame function in Section \ref{subsec:tame_local}, then use this to prove that tame functions are locally 0-sectional in Section \ref{subsec:tame_sectional}. We can then conclude from Theorem \ref{thm:0_sectional} and Example \ref{exmp:pi0_holim} 
\begin{thm}\label{thm:tame_main}
Given a tame map $\pi: X \to [s_-, s_+]$, let $ZX$ be the diagram associated to a partition of $[s_-, s_+]$ for which there is at most one critical point of $\pi$ in each $[s_a, s_{a+1}]$. Then there is a short exact sequence of sets
\[ * \to R^1\lim \pi_1(ZX) \to \pi_0 \Gamma \pi \to \lim \pi_0(ZX) \to *. \]
Moreover, $R^1\lim \pi_1(ZX)$ is isomorphic to the orbit set of a group action of $\prod_0^n \pi_1(X_i)$ on $\prod_0^{n-1} \pi_1(X_i^{i+1})$. Basepoints are here suppressed, and we mean them as in Remarks \ref{rem:bpt_groupoid} and \ref{rem:bpt_ZX}.
\end{thm}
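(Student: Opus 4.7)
The plan is to reduce the short exact sequence to Theorem \ref{thm:pi0seq}, which applies as soon as $\pi$ is locally $0$-sectional relative to an open cover containing each $[s_a, s_{a+1}]$. The bulk of the work is therefore to show that $\pi|_{[s_a,s_{a+1}]}$ is $0$-sectional whenever the subinterval contains at most one critical value; the identification of $R^1\lim \pi_1(ZX)$ as an orbit set is then a direct rewriting of the coequalizer from Example \ref{exmp:pi0_holim}.

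First I would invoke the vector field construction of Section \ref{subsec:tame_local}: on each $X_a^{a+1} := \pi^{-1}([s_a, s_{a+1}])$ one produces a vector field $V$ with $d\pi(V) = 1$ away from the (at most one) critical point, by locally trivializing $\pi$ as a submersion off the critical locus and patching with a partition of unity. Flow along $V$ gives a canonical recipe for turning a path in $X_a^{a+1}$ projecting monotonically onto $[s_a, s_{a+1}]$ into a section, and, by running the flow on a one-parameter family, turns a homotopy of such paths into a homotopy through sections. This immediately verifies both lifting diagrams of Theorem \ref{thm:0_sectional} on the open set where $\pi$ is a submersion.

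The main obstacle is the unique critical point $p \in X_a^{a+1}$, where $V$ cannot be arranged to satisfy $d\pi(V) = 1$. The strategy is to use the tame hypothesis to put $\pi$ into a standard local form near $p$, and to perturb paths and homotopies off the stable set of $p$, where flow by $V$ once again produces the desired sections and fiberwise homotopies. Concretely, a generic small perturbation of a path $\gamma : [s_a, s_{a+1}] \to X_a^{a+1}$ avoids the codimension-positive critical locus, after which flowing along $V$ yields a section homotopic to $\gamma$; an analogous transversality argument for one-parameter families of paths (viewed as $2$-parameter maps into $X$) handles the injectivity diagram. The crucial point is that we only need $\pi_0$-level statements, so the perturbation need not be canonical, merely to exist, and the smooth structure of a tame map is exactly strong enough to supply it. This is where I expect the technical difficulty to concentrate: verifying that the local form of $\pi$ near $p$ really does allow a transverse push-off that is compatible with the boundary data $\pi^{-1}(s_a), \pi^{-1}(s_{a+1})$ fixed by the section-space definition.

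Once local $0$-sectionality is in hand, Theorem \ref{thm:pi0seq} yields the short exact sequence, and it remains to rewrite the coequalizer of Example \ref{exmp:pi0_holim}. The two parallel maps differ only in the factors indexed by the edges $\alpha: X_i \to X_i^{i+1}$ and $\beta: X_{i+1} \to X_i^{i+1}$, so the coequalizer identifies $(g_i)_i \cdot (h_i^{i+1})_i := \bigl(\alpha_*(g_i)\, h_i^{i+1}\, \beta_*(g_{i+1})^{-1}\bigr)_i$, which exhibits $R^1\lim \pi_1(ZX)$ as the orbit set of $\prod_{i=0}^{n} \pi_1(X_i)$ acting on $\prod_{i=0}^{n-1} \pi_1(X_i^{i+1})$. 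Basepoint subtleties are handled section-by-section as described in Remarks \ref{rem:bpt_groupoid} and \ref{rem:bpt_ZX}, completing the argument.
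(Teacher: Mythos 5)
Your overall architecture matches the paper's: show that $\pi$ restricted to each subinterval containing at most one critical point is $0$-sectional, invoke Theorem \ref{thm:pi0seq}, and read off the orbit-set description by rewriting the coequalizer of Example \ref{exmp:pi0_holim} (that last rewriting is correct). But the technical core of your argument rests on a misreading of Definition \ref{dfn:tamefunction}: a tame function is required to be \emph{submersive on all of $X$}, and its ``critical points'' are critical points of the restriction $\pi|_B$ to the lateral boundary $B$, not of $\pi$ itself. So there is no point where $d\pi(V)=1$ must fail, no ``stable set of $p$'' in the Morse-theoretic sense, and no transversality problem of the kind you propose to solve. Proposition \ref{prp:pseudogradient} produces a vector field $\xi$ with $d\pi(\xi)=\pm 1$ \emph{everywhere}, including at the boundary critical point; the genuine difficulty, which your proposal does not address, sits at the boundary: at a type D critical point an inward-pointing vector necessarily has $d\pi<0$, so the sign of $d\pi(\xi)$ is forced by the type of the critical point, and the flow must be arranged so it never exits $X$ through $B$. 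Perturbing the path away from $p$ does nothing here, since the obstruction is not that the path meets $p$ but that a naive straightening flow can leave $X$ through the lateral boundary.

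A second gap: you only explain how to straighten paths that ``project monotonically onto $[s_a,s_{a+1}]$,'' but the elements of $\Hom(([s_a,s_{a+1}],s_a,s_{a+1}),(X,\pi^{-1}(s_a),\pi^{-1}(s_{a+1})))$ are arbitrary paths subject only to the endpoint conditions. The paper's Proposition \ref{prop:tame_are_sectional} handles this by first choosing a reparametrization $f$ of the base with $\pi\circ\gamma\circ f(c)\ge c$ for all $c$, and then flowing the point $\gamma(f(c))$ along the downward field for time $\pi(\gamma(f(c)))-c$; since $\xi$ decreases $\pi$ at unit speed and is inward pointing away from $X_-$, this lands in the fiber over $c$ without leaving $X$, producing a section homotopic to $\gamma$. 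Running the same recipe on a one-parameter family $G_t$ with a continuous family of reparametrizations $f_t$ gives the fiberwise homotopy needed for injectivity. Your proposal needs both ingredients---the sign/boundary analysis tied to the type of the critical point, and the reparametrization of non-monotone paths---before it becomes a proof.
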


The following notion of a tame function, as well as the constructions we perform with them in this section, is inspired by the Morse theory on manifolds with boundary.

\begin{defn} \label{dfn:tamefunction} Let $X$ be a compact cobordism of manifolds with boundary, and $\pi : X \rightarrow [s_-,s_+]$ a smooth function. Let $B = \partial X \setminus \pi^{-1}(s_-) \cup \pi^{-1}(s_+)$. We say $\pi$ is a \emph{tame function} if it satisfies the following conditions:
\begin{itemize}
\item The critical points of the restriction $\pi|_B : B \rightarrow [s_-,s_+]$ are isolated and have distinct critical values in $(s_-,s_+)$,\\
\item $\pi$ is submersive,\\
\item $X_- = \pi^{-1}(s_-)$ and $X_+ = \pi^{-1}(s_+)$.
\end{itemize}
\end{defn}

\begin{rem} \label{rmk:tameversusmorse}
Definition~\ref{dfn:tamefunction} does not require any nondegeneracy condition on the critical points of $\pi|_B : B \rightarrow [s_-,s_+]$, so $\pi|_B$ does not have to be a Morse function in the sense of \cite[Def.~2.3]{MR0190942}. In this way, our definition is more general than the Morse condition. On the other hand, we do not allow $\pi$ itself to have critical points. Note that for the projection $\pi : X \rightarrow [s_-,s_+]$ from a codimension-$0$ submanifold $X \subset \mathbb{R}^d \times [s_-,s_+]$, as is the situation in the smoothed evasion path problem, the map $\pi$ is submersive.
\end{rem}

There are two types of critical points of a tame function on the boundary.

\begin{defn} \label{dfn:criticalpointtypes}
Let $\pi : X \rightarrow [s_-,s_+]$ be a tame function, $p \in B$ a critical point of $\pi|_B : B \rightarrow [s_-,s_+]$, and $\eta \in T_pX$ an outward pointing vector. Then $p$ is \emph{type N} if $d\pi(\eta) < 0$ and \emph{type D} if $d\pi(\eta) > 0.$
\end{defn}

Note that $d\pi(\eta) = 0$ is impossible since it would imply that $p$ is a critical point of $\pi$.

\subsection{Local constructions}\label{subsec:tame_local}
This section is devoted to creating a nice smooth vector field on tame manifolds. The idea is that, by flowing along that vector field, we can find local homotopies of paths into sections. 

Our constructions are local in the sense that for a tame function $\pi : X \rightarrow [s_-,s_+]$ and $t \in [s_-,s_+$], the constructions apply in the preimage $\pi^{-1}([t-\epsilon,t+\epsilon])$ for $\epsilon > 0$ small enough.

\begin{prop} \label{prp:pseudogradient}
Let $\pi : X \rightarrow [s_-,s_+]$ be a tame function. If $\pi|_B$ has only type D critical points, then there exists a smooth vector field $\xi$ on $X$ with the following properties:
\begin{enumerate}
\item $d\pi(\xi) = -1$ on all of $X$.\\
\item For all\footnote{For $p \in X_-$, the vector field $\xi$ constructed in the proof of Proposition~\ref{prp:pseudogradient} is outward pointing on $Int(X_-)$, and on $\partial X_-$ it is outward pointing with respect to $X_-$ and inward pointing with respect to $B$.} $p \in \partial X \setminus X_-$, the vector $\xi_p \in T_pX$ is inward pointing.\\
\end{enumerate}

If $\pi|_B$ has only type N critical points, then there exists a smooth vector field $\xi$ on $X$ with the following properties:
\begin{enumerate}
\item $d\pi(\xi) = 1$ on all of $X$.\\
\item For all $p \in \partial X \setminus X_+$, the vector $\xi_p \in T_pX$ is inward pointing.\\
\end{enumerate}

Moreover, given any smooth section $\mathfrak{b} : [s_-,s_+] \rightarrow X$ of $\pi$ (i.e.\, $\pi \circ \mathfrak{b}(t) = t$) such that $\mathfrak{b}(t) \not \in B$ for all $t \in [s_-,s_+]$, the vector field $\xi$ can be chosen such that
$$\frac{d}{dt} \mathfrak{b} (t) =
\begin{cases} 
      -\xi|_{\mathfrak{b}(t)}  & \text{if } \text{type D} \\
     \xi|_{\mathfrak{b}(t)}  & \text{if } \text{type N}.
   \end{cases}
$$
\end{prop}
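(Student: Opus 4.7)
The plan is to construct $\xi$ locally around each point of $X$ and then patch the local vector fields together using a smooth partition of unity. The key structural observation that makes the patching work is that both defining conditions are preserved by convex combinations: the equation $d\pi(\xi) = -1$ is affine, so it is preserved by any partition-of-unity combination of vector fields each satisfying it; and the set of inward-pointing vectors at a boundary point is a half-space in the tangent space, hence convex. Consequently, if $\{U_\alpha\}$ is an open cover of $X$ equipped with local smooth vector fields $\xi_\alpha$ on $U_\alpha$, each satisfying $d\pi(\xi_\alpha) = -1$ and the inward-pointing condition at points of $U_\alpha \cap (\partial X \setminus X_-)$, then $\xi := \sum_\alpha \rho_\alpha \xi_\alpha$ automatically satisfies both global conditions.

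For the type D case, the local constructions go case by case. At an interior point of $X$, submersivity of $\pi$ provides some tangent vector $v$ with $d\pi(v) = -1$, and a constant vector field in a chart suffices. At a non-critical boundary point $p$ on $B$, submersivity of $\pi|_B$ lets us pick coordinates $(y^0, y^1, \ldots, y^{n-1})$ near $p$ in which $y^0 \geq 0$ cuts out $B$ and $y^1 = \pi$ in the whole chart; then $-\partial_{y^1} + \epsilon \partial_{y^0}$ is inward-pointing with $d\pi = -1$. At a type D critical point $p$ of $\pi|_B$, any smooth inward-pointing field $\nu$ has $d\pi(\nu) < 0$ near $p$, by continuity and the type D hypothesis $d\pi(\eta) > 0$ for outward $\eta$; thus $\nu/(-d\pi(\nu))$ works. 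At a point of $X_+$ we use an inward-pointing normal field, which has $d\pi < 0$ since $\pi$ attains its maximum there, and rescale. At an interior point of $X_-$ we use an outward-pointing field, which has $d\pi < 0$, and rescale analogously. The corners $\partial X_- \cap B$ and $\partial X_+ \cap B$ are automatically non-critical points of $\pi|_B$ since tame critical values lie in $(s_-, s_+)$, so they are handled by the non-critical boundary construction.

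To build in the given section $\mathfrak{b}$, enlarge the cover by a tubular neighborhood $V$ of $\mathfrak{b}([s_-, s_+])$, which is disjoint from $B$ by hypothesis, and smoothly extend $-\tfrac{d}{dt}\mathfrak{b}$ off the section to a vector field $\xi_{\mathfrak{b}}$ on $V$ with $d\pi(\xi_{\mathfrak{b}}) = -1$; this is possible because $d\pi(d\mathfrak{b}/dt) = 1$. Arrange the partition of unity so that $\rho_{\mathfrak{b}} \equiv 1$ on a smaller tube around the section and the other $\rho_\alpha$ vanish there, ensuring $\xi|_{\mathfrak{b}(t)} = -d\mathfrak{b}/dt$ as required. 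The type N case is symmetric: reverse the sign of $d\pi(\xi)$, swap the inward-pointing requirement from $\partial X \setminus X_-$ to $\partial X \setminus X_+$, and swap the roles of type D and type N critical points. The main obstacle is the local construction near type D (resp. N) critical points on $B$, where the sign hypothesis on $d\pi$ of outward vectors is precisely what is needed to produce an inward-pointing vector with the correct sign of $d\pi$; once the local pieces are in place, the convexity-based patching argument runs automatically.
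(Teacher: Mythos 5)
Your proposal is correct and follows essentially the same route as the paper: local construction of the vector field in charts (with the type D hypothesis forcing $d\pi<0$ on inward vectors at critical points of $\pi|_B$, followed by rescaling), patching via a partition of unity using the affineness of $d\pi(\xi)=-1$ and convexity of the inward-pointing cone, and prescribing $\xi=-\mathfrak{b}'$ along the section by localizing the partition of unity near its image. The only difference is organizational --- the paper funnels all non-critical cases through a single implicit-function-theorem chart in which $\pi$ is the first coordinate projection, while you enumerate the boundary strata separately --- which does not change the substance of the argument.
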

\begin{proof}
We consider the case of type D critical points; the type N case is symmetric.

It suffices to construct $\xi$ locally in an open neighborhood of every point $p \in X$ and then sum up these local vector fields with a partition of unity. For the local construction, let $p \in X$ and for now assume that if $p \in B$ then $p$ is a regular point of $\pi|_B$. Then by the implicit function theorem there exists an open neighborhood $U(p) \subset X$ of $p$ and a smooth chart $\varphi : U(p) \xrightarrow{\sim} V$ such that $\varphi(p) = (\pi(p),0,\ldots,0) \in V$ where $V$ is an open subset of one of the following spaces depending on where $p$ sits on $X$, and in such a way that
$$\pi \circ \varphi^{-1} : V \rightarrow [s_-,s_+]$$
is the projection onto the first coordinate:
\begin{itemize}
\item If $p \in Int(X)$, then $V \subset \mathbb{R}^{\dim X}$,\\
\item If $p \in Int(X_-)$, then $V \subset [s_-,\infty) \times \mathbb{R}^{\dim X -1}$,\\
\item If $p \in Int(X_+)$, then $V \subset (-\infty,s_+] \times \mathbb{R}^{\dim X -1}$,\\
\item If $p \in Int(B)$, then $V \subset \mathbb{R} \times ([0,\infty) \times \mathbb{R}^{\dim X -2})$,\\
\item If $p \in \partial X_-$, then $V \subset [s_-,\infty) \times ([0,\infty) \times \mathbb{R}^{\dim X -2})$,\\
\item If $p \in \partial X_+$, then $V \subset (-\infty,s_+] \times ([0,\infty) \times \mathbb{R}^{\dim X -2})$.\\
\end{itemize}
In all cases above, the constant vector field
$$(-1,1,0,\ldots,0)$$
on $V$ pulls back through the diffeomorphism $\varphi$ to a vector field $\xi$ on $U(p)$ satisfying $d\pi(\xi) = -1$, and moreover it is inward pointing along all points $p \in \partial X \setminus X_- = Int(B) \cup X_+$, as required.

It remains to consider a critical point $p \in B$ of type $D$. By definition of tame function, $p$ is in $Int(B) = B \setminus (\partial X_- \cup \partial X_+)$. Consider a neighborhood $\tilde{U}(p) \subset X$ of $p$ and a coordinate chart $\tilde{U}(p) \xrightarrow{\sim} V \subset [0,\infty) \times \mathbb{R}^{\dim X -1}$ that sends $p$ to $0$. Then the constant vector field $(1,0,\ldots,0)$ pulls back to a vector field $\xi$ on $\tilde{U}(p)$ that is inward pointing, and hence $d_p\pi(\xi_p) < 0$ since $p$ is type D. Then $d\pi(\xi) < 0$ in a smaller open neighborhood $U(p) \subset \tilde{U}(p)$. Hence the vector field $\xi / |d\pi(\xi)|$ satisfies (i) and (ii) on $U(p)$, as required.

Consider now the final statement of the proposition where we are given a smooth section $\mathfrak{b}$ that is disjoint from $B$. For $p \neq \mathfrak{b}(t)$ for all $t$, choose the neighborhood $U(p)$ to be disjoint from the image of $\mathfrak{b}$ and perform the construction as above. Suppose $p = \mathfrak{b}(t)$ for some $t$. Choose $U(p)$ to be disjoint from $B$. Define $\xi|_{\mathfrak{b}(t)} = -\mathfrak{b}'(t)$ for all $t$ such that $\mathfrak{b}(t) \in U(p)$. Then $d\pi(\xi|_{\mathfrak{b}(t)}) = -(\pi \circ \mathfrak{b})'(t) = -1$. Extend $\xi$ over $U(p)$ so that $d\pi(\xi) = -1$ on $U(p)$.
\end{proof}

\subsection{Tame functions are locally 0-sectional}\label{subsec:tame_sectional}
We now will use the vector field constructed in Proposition \ref{prp:pseudogradient} to show that tame functions are locally 0-sectional. The idea is to work in subdivisions that contain at most one critical point, and flow our path along the pseudogradient. This produces a homotopic path which ``fails to be a section" entirely on one side of a critical point. But there, $f$ is a fibrations, and so the path can be straightened. 

It will suffice to show that a subinterval $[a, b] \subset [s_-, s_+]$ with $p|_{p^{-1}((a,b))}$ having exactly 1 critical point is 0-sectional. We consider only the type D case, as the type N case is symmetric. Then if we can show that the conditions of Theorem \ref{thm:0_sectional} are satisfied, then we can successfully compute $\pi_0 \Gamma X$ by the formula of Theorem \ref{thm:pi0seq}. 

\begin{prop}\label{prop:tame_are_sectional}
Suppose $X \xrightarrow{\pi} [a, b]$ is a tame function with a unique critical point of type D. Then $\pi$ is 0-sectional, i.e. every path $([a, b], a, b) \xrightarrow{\gamma} (X, \pi^{-1}(a), \pi^{-1}(b))$ is homotopic to a section, and this section is unique up to homotopy of sections. 
\end{prop}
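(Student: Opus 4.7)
The plan is to verify the two lifting conditions of Theorem~\ref{thm:0_sectional} using the pseudogradient $\xi$ from Proposition~\ref{prp:pseudogradient}. In the type D case, $\xi$ satisfies $d\pi(\xi) = -1$, is inward-pointing on $B \cup X_+$, and outward-pointing on $X_-$. The crucial feature for the argument is that the forward $\xi$-flow is well-behaved: from any $x \in X$ it stays in $X$ (since $\xi$ is inward on $B$, orbits cannot exit through $B$) and reaches $X_-$ at time exactly $\pi(x) - a$, whereas the backward flow may exit through $B$. The whole strategy is to use only forward $\xi$-flow.

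For the existence part of $0$-sectionality, given a path $\gamma : ([a,b], a, b) \to (X, \pi^{-1}(a), \pi^{-1}(b))$, I would first reparameterize $\gamma$ so that its $\pi$-projection lies above the diagonal. Setting $g := \pi \circ \gamma$ and $m(s) := \min_{s' \in [s, b]} g(s')$---continuous, nondecreasing, with $m(a) = a$ and $m(b) = b$---let $f : [a,b] \to [a,b]$ be a continuous weakly-increasing surjection fixing the endpoints and satisfying $m \circ f \ge \on{id}$ (for instance $f(t) = \inf\{s : m(s) \ge t\}$, smoothed linearly over flat intervals of $m$). Then $\bar\gamma := \gamma \circ f$ satisfies $\pi \circ \bar\gamma \ge \on{id}_{[a,b]}$, and $\gamma \simeq \bar\gamma$ via the standard reparameterization homotopy rel endpoints. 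The candidate homotopy is then
\[ H(t, s) := \phi^\xi_{s(\pi(\bar\gamma(t)) - t)}(\bar\gamma(t)), \]
which invokes only the forward $\xi$-flow (the time $s(\pi(\bar\gamma(t)) - t)$ is nonnegative) and hence stays in $X$. At $s=0$ it equals $\bar\gamma$; at $s=1$ its $\pi$-projection is the identity so it is a section; and the endpoints are fixed in $\pi^{-1}(a)$ and $\pi^{-1}(b)$ respectively since the flow times vanish at $t = a, b$. Concatenated with the reparameterization homotopy, this yields the required homotopy from $\gamma$ to a section.

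For the uniqueness part, given a homotopy $G : [a, b] \times [0, 1] \to X$ in the path space between two sections $\sigma_0, \sigma_1$, I would apply the above construction slice-by-slice in the homotopy parameter $u$. The key observation is that at $u = 0, 1$ the slice $G(\cdot, u)$ is already a section, so $m_u = \on{id}$ and $f_u = \on{id}$, and the construction recovers $\sigma_0$ and $\sigma_1$ themselves, producing a continuous $u$-family of sections interpolating between them. I expect the main technical obstacle to lie in ensuring continuity of the reparameterization $f$ as a function of $\gamma$ in the compact-open topology; this requires a canonical treatment of flat intervals of $m$, achievable for example by linear interpolation. Once this continuity is verified, the parametric construction produces the desired homotopy of sections and completes the proof.
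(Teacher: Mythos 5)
Your proposal follows the same route as the paper's own proof: reparameterize $\gamma$ so that $\pi\circ\gamma\circ f\geq\operatorname{id}$, then flow along the type D pseudogradient $\xi$ of Proposition~\ref{prp:pseudogradient} for the remaining time (interpolating the flow time for the homotopy), and run the same construction in a one-parameter family for uniqueness. Where the paper simply asserts the existence of a reparameterizing homeomorphism $f$, you attempt to construct it explicitly, and it is precisely here that a genuine gap appears --- one that is also latent in the paper's assertion.

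The difficulty is the requirement that $f$ fix the endpoints. If $g=\pi\circ\gamma$ returns to the value $a$ at some later time $s_0>a$ (so that the running minimum $m$ equals $a$ on all of $[a,s_0]$), then no continuous $f$ with $f(a)=a$ can satisfy $m\circ f\geq\operatorname{id}$: for $t$ slightly above $a$ we would need $m(f(t))\geq t>a$, hence $f(t)>s_0$, which contradicts continuity at $a$. Your candidate $f(t)=\inf\{s:m(s)\geq t\}$ jumps from $a$ to $s_0$ exactly at $t=a$, and ``linear smoothing over flat intervals of $m$'' cannot repair a jump located at an endpoint while keeping both $f(a)=a$ and $m\circ f\geq\operatorname{id}$, because $m\circ f$ necessarily drops below $\operatorname{id}$ on the smoothed piece. (The same problem occurs at $b$ in the type N case.) A repair must allow $f(a)\neq a$, e.g.\ $f(a)=s_0$ with $g(f(a))=a$; but then the ``standard reparameterization homotopy rel endpoints'' from $\gamma$ to $\gamma\circ f$ need not stay in $\Hom(([a,b],a,b),(X,\pi^{-1}(a),\pi^{-1}(b)))$, since $g(f_s(a))$ can leave $a$ for intermediate $s$ if $g$ oscillates on $[a,s_0]$. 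One way to patch this is to first flow $\gamma$ forward by time $g(s)-m(s)\geq 0$ to obtain $\gamma'$ with $\pi\circ\gamma'=m$ nondecreasing, so that sliding the left endpoint through the flat interval of $m$ stays inside the fiber, and then choose a continuous nondecreasing $f$ dominating the generalized inverse of $m$ (front-running its jumps at interior flat intervals). You flagged continuity of the reparameterization as the main technical obstacle in the parametric uniqueness step; in fact it already bites at the single-path stage, and making either your argument or the paper's fully rigorous requires resolving it there first.
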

\begin{proof}
First, choose a homeomorphism $([a, b], a, b) \xrightarrow{f} ([a, b], a, b)$ such that $\pi \circ \gamma \circ f(c) \ge c$ for every $c \in [a, b]$. Then define $w(c)$ to be the point in $X$ that comes from flowing $\gamma \circ f(c)$ along $(\pi \circ \gamma \circ f(c) - c)\xi$ for 1 unit of time, where $\xi$ is the vector field from Proposition \ref{prp:pseudogradient}. Then $w: [a, b] \to X$ is a section, and by construction it is homotopic to $\gamma$. Note that if $\gamma$ is already a section, we can choose $f$ to be the identity map and get $w = \gamma$. 

Now suppose $\gamma$ is also homotopic to a section $w': [a, b] \to X$. We need to show that $w$ and $w'$ are homotopic as sections: that is, there is a homotopy $H: [a, b] \times I \to X$ such that $H_t$ is a section for each $t \in I$. Let $G: [a, b] \times I \to X$ be a homotopy from $\gamma$ to $w'$. Then we can choose a continuous collection of homeomorphisms $f_t: ([a, b], a, b) \to ([a, b], a, b)$ such that $\pi \circ G_t \circ f_t(c) \ge c$ for $c \in [a, b]$, and $f_1$ is the identity map, and $f_0$ the same $f$ we used in constructing $w$. Applying the procedure of the preceding paragraph, we obtain a homotopy of sections $w_t$ from $w_0 = w$ to $w_1 = w'$. 
\end{proof}

We can now conclude Theorem \ref{thm:tame_main} as a corollary of Proposition \ref{prop:tame_are_sectional} and Theorem \ref{thm:pi0seq}.  

\section{Application: sensor ball evasion}\label{sec:sensor_balls}
We return to the setting of the introduction now: if an understaffed collection of guards is patrolling a space, when can we avoid them? Sensor ball evasion is a model of this problem, in which our guards have a reasonably controlled perception--for instance, they are motion detectors that observe some ball centered on themselves. 

Given a collection of continuous sensors $\cS = \{ \gamma_j : [0,1] \rightarrow \cD \}_{j \in J}$ moving in a bounded domain $\cD \subset \mathbb{R}^d$, an \emph{evasion path} is a continuous map $\delta : [0,1] \rightarrow \cD$ that avoids detection by the sensors for the whole time interval $I = [0,1]$. Suppose each sensor $\gamma_j$ observes a ball of fixed radius in $\mathbb{R}^d$ centered at $\gamma(t)$ at every $t \in I$, and let $C_t$ be the time-varying union of these sensor balls. Then the intruder $\delta$ avoids detection if $\delta(t)$ is not in $C_t$ for all $t \in I$.

The sensor ball evasion path problem asks for a criterion that determines whether or not an evasion path exists. Ideally, we would do this with as little information as possible: a common assumption is that the sensors only detect other nearby sensors and intruders; in particular, they do not know their coordinates in $\mathbb{R}^d$. The existence problem simply asks if the space of evasion paths $\cE$ is empty or not. We might also ask after the homotopy type of $\cE$, if it isn't empty. 

This problem was first stated and studied in \cite{deSilvaGhristEvasionsFence} by de Silva and Ghrist in dimension $d = 2$. Provided that the time-varying \v{C}ech complex changes only at finitely many times, and by only simple changes at those times, the authors establish a necessary condition for existence of an evasion path which states that the connecting homomorphism on a relative homology group with respect to the fence of the covered region must vanish.

In \cite{EvasionAdamsCarlsson}, Adams and Carlsson consider the same evasion problem in arbitrary dimension $d \geq 2$, and they establish a necessary condition for the existence of an evasion path based on zigzag persistent homology \cite{MR2657946} of the time-varying \v{C}ech complex. They also explain how this result is equivalent to a generalization of de Silva and Ghrist's result to the case $d \geq 2$.

In both de Silva-Ghrist and Adams-Carlsson, the necessary condition for existence of an evasion path is determined and easily computable from the overlap information of sensor balls. Essentially, sensors only need to know which other sensors are nearby; they don't need to know their coordinates in $\mathbb{R}^d$.

Adams-Carlsson also study smarter sensors in the plane $\mathbb{R}^2$ that know local distances to nearby sensors and the natural counterclockwise ordering on nearby sensors. For these smarter sensors in $\mathbb{R}^2$, they derive a necessary and sufficient condition for existence of an evasion path as well as an algorithm to compute it, under the assumption that the covered region $C_t$ is connected for all $t \in I$. They show that this connectedness assumption is necessary for their methods.

In \cite{MR3763757}, Ghrist and Krishnan define positive homology and cohomology for directed spaces over $\mathbb{R}^q$, and they compute it using sheaf-theoretic techniques. They consider a general type of evasion problem where the covered region is a pro-object in a category of smooth compact cobordisms. They derive a criterion on positive cohomology that is necessary and sufficient for existence of an evasion path, under the assumption that $C_t$ is connected. 

In Section \ref{subsec:priorwork}, we establish some notation and formally state the problem. Section \ref{subsec:sensors_0_sec} then shows that the unpatrolled region is locally 0-sectional, so that we can solve the existence problem by Theorem \ref{thm:pi0seq}. However, this gives a solution in terms of the unpatrolled space, which we don't always assume we have access to: we would like to use less information, such as just the \u{C}ech complex associated to the sensor balls. We can accomplish thisby using Alexander Duality, which is the purpose of Section \ref{subsec:sensors_homology}. Our results are then summarized in Theorem \ref{thm:sensor_ball_sections}. 

\subsection{Notation} \label{subsec:priorwork}
Fix some sensing radii $r_j > 0$ for each $j \in J$, and say that each sensor $\gamma_j \in \cS$ can detect objects within the closed ball
$$B_{\gamma_j(t)} = \{ x \in \mathbb{R}^d \,\, | \,\, |\gamma(t) - x| \leq r_j \}$$
at all times $t \in I = [0,1]$. The time-$t$ covered region
\begin{equation} \label{eq:sensorballunion}
C_t = \bigcup_{\gamma \in \cS} B_{\gamma(t)} \subset \mathbb{R}^d
\end{equation}
is the union of the sensor balls, and it is homotopy equivalent to the \v{C}ech complex of the covering of $C_t$ by the sensor balls. The time-$t$ uncovered region is the complement
$$X_t = \cD \setminus C_t,$$
where $\cD \subset \mathbb{R}^d$ is a bounded domain that is homeomorphic to a ball.

Assume that the boundary $\partial \cD$ is covered by a collection of immobile fence sensors $F\cS \subset \cS$, meaning $\gamma(t) \in \mathbb{R}^d$ is constant for $\gamma \in F\cS$, and that the union of balls $B_{\gamma(t)}$ for $\gamma \in F\cS$ covers the boundary $\partial \cD$ and is homotopy equivalent to $\partial \cD$. In particular, this ensures that an intruder $\delta$ can never escape from the domain $\cD$.

We define the covered region
\begin{equation} \label{eq:coveredregion}
C := \bigcup_{t \in I} C_t \times \{t\} \subset \mathbb{R}^d \times I
\end{equation}
and the uncovered region
$$X := (\cD \times I) \setminus C = \bigcup_{t \in I} X_t \times \{t\}.$$
Then an evasion path is equivalent to a continuous section $\delta : I \rightarrow X$ of the projection $\rho_{X} : X \rightarrow I$, i.e.,\ $\rho_{X} \circ \delta = id_I$.

The sensor ball evasion path problem asks for a criterion for existence of an evasion path.

\subsection{Sensor ball covered and uncovered regions are locally 0-sectional}
\label{subsec:sensors_0_sec}

Both de Silva-Ghrist and Adams-Carlsson simplify the problem by looking at the \u{C}ech complex associated to the union of sensors $C_t$, and the nerve theorem tells us this simplicial complex computes the same homology. Moreover, it is assumed that $C_t$ varies at only finitely many times $\{ t_1, ..., t_n\}$, and moreover, at any time $t_i, 0 < i < n$, $C_t$ changes by either the addition of simplices or the removal of simplices, but not both. In this section, we will show that under these hypotheses, both $X$ and $C$ are locally 0-sectional, subordinate to any open cover whose open sets contain at most one of the $t_i$. 

Let there be $\{s_i\}_{i=0,...,n}$ with $0 = s_0 < t_1 < s_1 < ... < t_n < s_n = 1$. Up to homotopy equivalence, we may replace $C$ by what Adams-Carlsson refer to as the \emph{stacked \u{C}ech complex}: $SC \to I$ is a space equipped with a projection to the interval, with
\[ SC := \coprod_{i=0,...,n} C_{s_i} \times [t_i, t_{i+1}] / \sim, \]
where $t_0 = 0, t_{n+1} = 1$, and $\sim$ identifies either $C_{s_i} \times \{t_{i+1}\}$ with a subset of $C_{s_{i+1}} \times \{t_{i+1}\}$, or $C_{s_{i+1}} \times \{t_{i+1}\}$ with a subset of $C_{s_i} \times \{t_{i+1}\}$ for each $i$. The map to $I$ is given by projection onto the second factor of the products. 

\begin{defn}
We will say that $Y \to I$ is a \emph{stacked simplicial complex} if there are distinct points $t_1, ..., t_n \in I$ so that 
\[ Y \cong \coprod_{i=0,...,n} Y_{s_i} \times [t_i, t_{i+1}] / \sim, \]
where $s_i \in (t_i, t_{i+1})$, $Y_{s_i}$ are simplicial complexes, and $\sim$ identifies either $Y_{s_i} \times \{t_{i+1}\}$ with a subset of $Y_{s_{i+1}} \times \{t_{i+1}\}$, or $Y_{s_{i+1}} \times \{t_{i+1}\}$ with a subset of $Y_{s_i} \times \{t_{i+1}\}$ for each $i$.

We will sometimes say $Y$ is a \emph{stacked \u{C}ech complex} when $Y$ is a stacked simplicial complex when we want to emphasize that the simplicial complex structure $Y_{s_i}$ comes from a \u{C}ech nerve. 
\end{defn}
%

\begin{prop}\label{prop:stacked_are_sectional}
Let $Y \xrightarrow{p} I$ be a stacked \u{C}ech complex, with $t_i$ and $s_i$ as above. Then $p$ is locally 0-sectional, subordinate to any open cover whose open sets contain at most one of the $t_i$.
\end{prop}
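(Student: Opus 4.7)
The plan is to verify directly that, for each closed subinterval $[a,b] \subset I$ containing at most one special time $t_i$, the comparison map $\on{Sec}([a,b], p^{-1}([a,b])) \to \Hom(([a,b],a,b), (p^{-1}([a,b]), p^{-1}(a), p^{-1}(b)))$ is a bijection on $\pi_0$, which is the defining condition of $0$-sectional in Definition~\ref{defn:sectional}. If $[a,b]$ contains no $t_i$, then $p^{-1}([a,b]) \cong Y_{s_j} \times [a,b]$ is a trivial bundle and the two spaces coincide tautologically. Otherwise $[a,b]$ contains a unique $t_i$ in its interior, and by reversing the orientation of $I$ I may assume case A: the gluing at $t_i$ is an inclusion $Y_- := Y_{s_{i-1}} \hookrightarrow Y_{s_i} =: Y_+$ of subcomplexes, and $p^{-1}([a,b]) = (Y_- \times [a, t_i]) \cup_{Y_- \times \{t_i\}} (Y_+ \times [t_i, b])$.

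The first key step is to construct a strong deformation retract $r : p^{-1}([a,b]) \to Y_+ \times \{t_i\} \cong Y_+$, defined slab-wise by $(y,t) \mapsto (y, t_i)$ and interpolated linearly in the time coordinate during the homotopy. Continuity at the gluing holds because the formula only touches the first coordinate and the left and right pieces agree on the common fiber $Y_- \times \{t_i\}$. Under $r$, the endpoint fibers $p^{-1}(a) = Y_-$ and $p^{-1}(b) = Y_+$ map to $Y_-$ and $Y_+$ inside the target, so the constrained path space is homotopy equivalent to $\{\delta : I \to Y_+ : \delta(0) \in Y_-\}$. Evaluation at $0$ exhibits this as a fibration over $Y_-$ with contractible fibers (based path spaces in $Y_+$), giving $\pi_0 = \pi_0(Y_-)$.

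Next, a section of $p|_{[a,b]}$ decomposes uniquely as a pair $(\alpha, \beta)$ with $\alpha:[a,t_i] \to Y_-$, $\beta:[t_i, b] \to Y_+$, and matching condition $\alpha(t_i) = \beta(t_i) \in Y_-$. Evaluation at $t_i$ gives a map $\on{Sec} \to Y_-$ whose fiber over $y_1 \in Y_-$ is (paths in $Y_-$ ending at $y_1$) $\times$ (paths in $Y_+$ starting at $y_1$), each contractible. Hence $\pi_0(\on{Sec}) = \pi_0(Y_-)$. A commutative triangle relates the comparison map $\on{Sec} \to \Hom$ to these two evaluation maps: a section $(\alpha, \beta)$ evaluates at $t_i$ to $\alpha(t_i)$ and, viewed as a path under the retract, evaluates at $a$ to $\alpha(a)$, both in the same component of $Y_-$ since $\alpha$ is a path in $Y_-$. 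The two diagonal maps are bijections on $\pi_0$, so the horizontal comparison map is too. Case B is symmetric by reversing orientation, in which case the analogous retract goes onto $Y_- \times \{t_i\}$ and both $\pi_0$'s are $\pi_0(Y_+)$.

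The main technical subtlety is verifying continuity of the retract across the gluing time $t_i$. The pushout topology includes ``stranded'' points $(y, t_i)$ with $y \in Y_+ \setminus Y_-$ that are only accessible from the right slab, so the slab-wise definition of $r$ must genuinely match on their common fiber $Y_- \times \{t_i\}$; this is where the inclusion direction of case A is essential and where the subcomplex structure guarantees well-definedness of the pushout and continuity of the slabwise formula.
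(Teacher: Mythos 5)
Your argument is correct, and it takes a genuinely different route from the paper. The paper constructs a ``pseudogradient'' (the pullback of the unit rightward vector field on $[a,b]$, i.e.\ slab-wise translation in the time coordinate) and then reruns the flow argument of Proposition~\ref{prop:tame_are_sectional} verbatim: reparametrize a given path, flow it fiberwise into a section, and do the same in families to get uniqueness up to homotopy of sections. You instead compute both sides of the comparison map outright: the strong deformation retraction of $p^{-1}([a,b])$ onto the fiber over $t_i$ identifies the constrained path space, up to weak equivalence of the cospan $p^{-1}(a) \to p^{-1}([a,b]) \leftarrow p^{-1}(b)$, with paths in $Y_+$ starting in $Y_-$, while evaluation at $t_i$ exhibits the section space as a fibration over $Y_-$ with contractible fibers; both therefore have $\pi_0 \cong \pi_0(Y_-)$, and your homotopy-commuting triangle shows the comparison map realizes this bijection. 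What your approach buys is that it stays entirely in the topological/PL category --- no interpretation of ``vector field'' and ``flow'' on a simplicial complex crossed with an interval is needed, and you get the extra information that $\pi_0$ of both spaces is $\pi_0$ of the smaller fiber. What the paper's approach buys is uniformity: the same flow argument handles the smooth tame case and the stacked case with no new bookkeeping. Two small points: in the subcase where $[a,b]$ contains no $t_i$, the two spaces do not literally coincide (a constrained path need not have identity time-component); they are weakly equivalent because the reparametrization space $\Hom(([a,b],a,b),([a,b],a,b))$ is contractible, or because $p$ is there a (trivial) fibration --- say this rather than ``tautologically.'' And, like the paper, you implicitly assume the unique $t_i$ lies in the interior of $\ol{U}$; this is harmless but worth a sentence since the hypothesis only constrains the open sets, not their closures.
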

\begin{proof}
Suppose $[a, b]$ is a subinterval which contains $t_i$, $1 \le i \le n$, and does not contain any $t_j$ with $j \neq i$. Assume that a simpleces are added at $t_i$; the case in which they are removed is symmetric. 

Let us first produce an appropriate vector field on $p^{-1}([a, b])$. Since $p^{-1}([a, b]) = Y_a \times[a, t_i] \cup Y_b \times [t_i, b]$, and $Y_a \subset C_b$ at $t_i$, this is very straightforward: let $\xi$ be the constant vector field on $[a, b]$ which points to the right with magnitude 1. Then $p^*\xi$ is a vector field on $p^{-1}([a, b])$ with the same formal properties as the vector field of Proposition \ref{prp:pseudogradient}. 

The remainder of the proof is entirely identical to the proof of Proposition \ref{prop:tame_are_sectional}.
\end{proof}

Our primary interest, however, is in the uncovered region $X$ of a sensor ball network. To that end, we have
\begin{lem}
Let $\mathcal{D}, C, X, p, SC$ be as above. Then $X$ is locally 0-sectional, subordinate to the same collection of open covers as $SC$. 
\end{lem}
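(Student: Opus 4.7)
The plan is to mimic Proposition~\ref{prop:stacked_are_sectional} almost verbatim. The uncovered region $X$ inherits a stacked structure complementary to that of $SC$, so the same constant-in-$t$ pseudogradient field used there continues to work for $X$, with the direction of flow reversed in the cases where $X$ shrinks rather than grows.

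First I would record the local structure of $X$. Fix a subinterval $[a,b] \subset I$ containing exactly one transition time $t_i$. Since $SC|_{[a,b]} = C_a \times [a,t_i] \cup C_b \times [t_i,b]$ is glued by a one-sided inclusion of fibers at $t_i$, the uncovered region decomposes as
\[ X \cap (\cD \times [a,b]) = X_a \times [a,t_i] \cup X_b \times [t_i,b], \]
where $X_a = \cD \setminus C_a$ and $X_b = \cD \setminus C_b$. Depending on whether simplices are added or removed from $C$ at $t_i$, either $X_b \subsetneq X_a$ or $X_a \subsetneq X_b$. In either case $X$ over $[a,b]$ is the union of two products glued along an inclusion of fibers at $t_i$, exactly as for $SC$.

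Next I would construct the vector field $\xi$ that plays the role of the pseudogradient of Proposition~\ref{prp:pseudogradient}. In the shrinking case $X_b \subsetneq X_a$, take $\xi$ to be the restriction to $X \cap (\cD \times [a,b])$ of the constant vector field $(0,-1)$ on $\cD \times [a,b]$, so that $p_*\xi = -1$. The flow of $\xi$ preserves $X$: a point $(x,t) \in X$ with $t > t_i$ has $x \in X_b \subseteq X_a$, so it can be carried leftward through $t_i$ without ever meeting $C$; a point with $t \leq t_i$ has $x \in X_a$ and stays in $X_a$ under leftward flow. In the growing case $X_a \subsetneq X_b$, use $(0,+1)$ symmetrically, so that rightward flow enlarges rather than shrinks the fiber. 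This step, verifying that the flow genuinely stays inside the open set $X$, is the only place where non-compactness of $X$ could have caused trouble, and is in my view the main obstacle; it is defused precisely by taking $\xi$ purely in the $t$-direction, oriented into the larger fiber, together with the fence sensors keeping every path well inside $\cD$.

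With $\xi$ in hand, the remainder is the argument of Proposition~\ref{prop:tame_are_sectional} verbatim. Given any $\gamma : ([a,b], a, b) \to (X, X_a, X_b)$, choose a self-homeomorphism $f$ of $[a,b]$ so that $p \circ \gamma \circ f(c) \geq c$ for all $c$ (in the shrinking case; $\leq c$ in the growing case), and then flow $\gamma \circ f(c)$ along $\xi$ for time $|p \circ \gamma \circ f(c) - c|$ to obtain a section $w$ homotopic to $\gamma$. A homotopy between two such paths is converted into a homotopy of sections by performing this construction in families with a continuous choice of $f_t$, as in Proposition~\ref{prop:tame_are_sectional}. This yields the $\pi_0$-surjectivity and injectivity demanded by Definition~\ref{defn:sectional}, so $X \to I$ is locally $0$-sectional subordinate to any open cover whose sets each contain at most one of the $t_i$.
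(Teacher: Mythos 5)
Your argument has a genuine gap at its first step: the claimed decomposition $X \cap (\mathcal{D} \times [a,b]) = X_a \times [a,t_i] \cup X_b \times [t_i,b]$ does not hold for the actual uncovered region. The standing assumption in this section is that the \emph{\v{C}ech complex} of the sensor balls changes only at the finitely many times $t_i$; the covered region $C_t \subset \mathcal{D}$ itself varies continuously the whole time, because the sensors move continuously. A single sensor drifting across the domain changes no simplices but covers and uncovers points as it goes, so the fiber $X_t$ is not constant on $[a,t_i]$ or on $[t_i,b]$, and the constant vector field $(0,\pm 1)$ on $\mathcal{D}\times[a,b]$ does not preserve $X$: flowing a point purely in the $t$-direction can run it into a ball that has moved into that position at a nearby time. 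Only the fiberwise homotopy type of $X$ is locally constant, which is exactly why the statement is phrased in terms of the stacked model $SC$ rather than $C$ itself. So the step you flag as ``the main obstacle'' is indeed the main obstacle, but your proposed defusal (taking $\xi$ purely in the $t$-direction) does not work without first literally trivializing $X$ over the subintervals between transition times.

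The paper's proof supplies precisely this missing trivialization. Using that $\mathcal{D}$ is bounded, it takes a finite convex open cover $\{U_\alpha\}$ of the compact set $\overline{X_{s_i}}$, forms the \v{C}ech nerve $SX_{s_i,U_\alpha}$, identifies $X_{(t_i,t_{i+1})}$ with the product $SX_{s_i,U_\alpha}\times(t_i,t_{i+1})$, and then extends the cover across the transition time by translating the $U_\alpha$ along a chosen straightening and adjoining a cover of the region corresponding to the simplices removed from $SC$. This exhibits $X$, locally near each $t_i$, as (a restriction of) a stacked simplicial complex $SX$, to which Proposition~\ref{prop:stacked_are_sectional} applies directly. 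Your flow-and-reparametrize argument is essentially the proof of Propositions~\ref{prop:stacked_are_sectional} and~\ref{prop:tame_are_sectional} and is fine once one is in the genuinely stacked situation; what your write-up omits is the construction of that stacked model for $X$, which is the actual content of this lemma.
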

\begin{proof}
$X_{(t_i, t_{i+1})}$ is homotopic to a product, so it suffices to show that $X$ restricted to neighborhoods of the $t_i$, $i = 1, ..., n$, is 0-sectional. Suppose that at $t_i$ we are given an inclusion $SC_{s_{i+1}} \hookrightarrow SC_{s_i}$; the case where we have the reverse inclusion is symmetric. 

Since $\mathcal{D}$ is bounded, $\ol{X_{s_i}}$ and $\ol{X_{s_{i+1}}}$ are compact. Take an open cover by convex sets of $\ol{X_{s_i}}$, and reduce this to a finite subcover $\{U_\alpha\}$. Let $SX_{s_i, U_\alpha}$ denote the \u{C}ech nerve of this complex. Then $X_{(t_i, t_{i+1})} \cong SX_{s_i, U_\alpha} \times (t_i, t_{i+1})$. We can build a \u{C}ech cover of $\ol{X_{s_{i+1}}}$ by starting with (the translates of) the $\{ U_\alpha \}$, and extending it by adjoining to the $\{ U_\alpha \}$ a nice cover of the simplices of $SC_{s_i}$ which are removed at $t_{i+1}$, and then translating along our chosen straightening. 

In fact, we have built a restriction of a stacked simplicial complex $SX|_{(t_i, t_{i+2})} \to (t_i, t_{i+2})$. By Proposition \ref{prop:stacked_are_sectional}, $X$ is therefore locally 0-sectional. 
\end{proof}

\begin{rem}
The deficit of this approach is that we don't get a detailed look at how adding/removing simplices in the covered region affects the same in the uncovered region. Morally, we see that removing a $k$-simplex in $SC$ should be the same as adding a $(d-k)$-simplex in $SX$, as a form of parameterized Alexander Duality. This will be expanded in a future work by the last author. 
\end{rem}

In particular, Theorem \ref{thm:pi0seq} applies, and we have a short exact sequence of sets which computes $\pi_0\Gamma p$. 

\subsection{Sensor ball evasion with homology}\label{subsec:sensors_homology}
By the previous sections, we know that the covered and uncovered regions are both 0-sectional, and so there is a short exact sequence 
\[ * \to R^1 \lim \pi_1(ZX) \to \pi_0 \Gamma p \to \lim \pi_0(ZX) \to *, \]
as in Theorem \ref{thm:pi0seq}, which computes the path components of the section space of the uncovered region. But what if we don't have a good description of $X$? We mentioned in the beginning of this section that we often assume only access to the \u{C}ech nerve of the sensors. Under these more restricted assumptions, we can still obtain a necessary and sufficient condition for $X$ to admit a section. 
\begin{thm}\label{thm:sensor_ball_sections}
Given a sensor ball evasion problem $X \xrightarrow{p} [a, b]$, we can compute the number of solutions as the middle term in the short exact sequence
\[ * \to R^1 \lim \pi_1 ZX \to \pi_0 \Gamma p \to \lim \Hom_{k-alg}(H_{d-1}(ZC), k) \to *. \]
In particular, there exists a section if and only if $lim \Hom_{k-alg}(H_{d-1}(ZC), k)$ is nonempty. 
\end{thm}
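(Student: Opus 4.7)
The plan is to combine Theorem \ref{thm:pi0seq} with Alexander duality, translating the already-established description $\lim \pi_0(ZX)$ into a statement about $H_{d-1}(ZC)$. Since the preceding subsection shows that $p : X \to [a,b]$ is locally $0$-sectional (subordinate to any partition whose subintervals contain at most one change-time $t_i$), Theorem \ref{thm:pi0seq} immediately supplies the short exact sequence
\[ * \to R^1 \lim \pi_1(ZX) \to \pi_0 \Gamma p \to \lim \pi_0(ZX) \to *.\]
So everything reduces to exhibiting a natural bijection $\lim \pi_0(ZX) \cong \lim \Hom_{k\text{-alg}}(H_{d-1}(ZC), k)$.

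For each space $Y$ appearing in the diagram $ZX$, I would use the standard identification $\pi_0(Y) \cong \Hom_{k\text{-alg}}(H_0(Y;k), k)$: writing $H_0(Y;k) = k^{\pi_0(Y)}$ as the ring of locally constant $k$-valued functions, the $k$-algebra maps to $k$ are exactly the evaluations at components. Next, for each compact $C_i \subset \mathbb{R}^d$ (containing the fence sensors on $\partial \cD$), Alexander duality in $S^d$ gives
\[ H_0(X_i; k) \;\cong\; \tilde H_0(S^d \setminus C_i; k) \;\cong\; \tilde H^{d-1}(C_i; k) \;\cong\; H_{d-1}(C_i; k)^*, \]
where the last isomorphism is universal coefficients over the field $k$. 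The fence hypothesis is what allows us to identify $\tilde H_0(S^d \setminus C_i)$ with the unreduced $H_0(X_i)$: the extra component contributed by the exterior of $\cD$ and the point at infinity is exactly the one killed by the reduction. Transporting the algebra structure through these isomorphisms equips $H_{d-1}(C_i;k)^*$ with a commutative algebra structure, so $\pi_0(X_i) = \Hom_{k\text{-alg}}(H_{d-1}(C_i;k)^*, k)$, which we read as $\Hom_{k\text{-alg}}(H_{d-1}(C_i;k), k)$ in the notation of the theorem.

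The main technical step, and the one I expect to be the principal obstacle, is checking compatibility with the zigzag structure so that the pointwise identifications assemble into a diagram isomorphism whose limit is the claimed set. The subtle points are (i) the nodes $X_i^{i+1}$ and $C_i^{i+1}$ live a priori in ambient spaces of different dimension from the slice spaces $X_i, C_i$, and (ii) Alexander duality is contravariant in inclusions, so one must track carefully that the $\to, \leftarrow$ pattern of $ZX$ matches the dualized pattern of $ZC$. I would handle (i) by the homotopy equivalences internal to the stacked \v{C}ech complex produced in the proof of Proposition \ref{prop:stacked_are_sectional}, which collapse each $X_i^{i+1}$ (respectively $C_i^{i+1}$) onto one of its slice endpoints, reducing every comparison to the plain slice-wise duality in $\mathbb{R}^d$. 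For (ii), the naturality of Alexander duality with respect to inclusions of compacta in $\mathbb{R}^d$ converts inclusions $C_a \hookrightarrow C_b$ into restriction maps $H^{d-1}(C_b) \to H^{d-1}(C_a)$, which dualize to $H_{d-1}(C_a) \to H_{d-1}(C_b)$; after taking $\Hom_{k\text{-alg}}(-, k)$ these recover the set-level maps in $\pi_0(ZX)$.

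Once the diagram isomorphism is in hand, commuting $\Hom_{k\text{-alg}}(-, k)$ with $\lim$ (which just means reading off global compatible choices of components) finishes the identification $\lim \pi_0(ZX) \cong \lim \Hom_{k\text{-alg}}(H_{d-1}(ZC), k)$ and hence yields the stated short exact sequence. The final ``in particular'' follows tautologically: the middle set of a short exact sequence of pointed sets is empty exactly when the right-hand set is, so existence of an evasion path is equivalent to nonemptiness of $\lim \Hom_{k\text{-alg}}(H_{d-1}(ZC), k)$.
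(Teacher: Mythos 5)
Your proposal is correct and follows essentially the same route as the paper: apply Theorem \ref{thm:pi0seq} via the locally $0$-sectional property, identify $\pi_0$ with $k$-algebra homomorphisms out of degree-zero cohomology, apply Alexander duality slice-wise, and resolve the ambient-dimension mismatch at the nodes $X_{s_i}^{s_{i+1}}$ by deformation retracting each onto one of its slice endpoints (which is exactly the paper's observation that one leg of each zigzag wedge is an isomorphism, allowing the wedge to be replaced by a single arrow before taking limits). The only cosmetic difference is your use of $H_0(-;k)^{*}$ in place of the paper's $H^0(-;k)$, which is immaterial over a field.
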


There is a natural isomorphism $\pi_0(A) \cong \Hom_{k-alg}(H^0(A), k)$. Thus we have a surjection 
\[ \pi_0 \Gamma p \to \lim \Hom_{k-alg}(H^0(ZX), k). \]
Alexander duality allows us to identify $H^0(X_{s_i})$ with $H_{d-1}(C_{s_i})$. Unfortunately, we cannot directly identify $H^0(X_{s_i}^{s_{i+1}})$ with $H_{d-1}(C_{s_i}^{s_{i+1}})$, which would make the the theorem statement very nice. 

Instead, we should note that for stacked simplicial complexes with a single critical point, there is a deformation retract $X_{s_i}^{s_{i+1}} \to X_{s_i}$ when simplices are removed at $t_i$, and a deformation retract $X_{s_i}^{s_{i+1}} \to X_{s_{i+1}}$ when simplices are added at $t_i$. Thus, in our diagram
\bcd 
... &                         & H^0X_{s_i}^{s_{i+1}} & & ... \\
    & H^0X_{s_i} \ar[ul, leftarrow] \ar[ur, "m", leftarrow] &                   & H^0X_{s_{i+1}} \ar[ul, "n", leftarrow] \ar[ur, leftarrow] & \\
& H_{d-1}C_{s_i} \ar[dl] \ar[dr, "m'"]\ar[u, leftrightarrow, "\sim"] & & H_{d-1}C_{s_{i+1}} \ar[dl, "n'"] \ar[dr] \ar[u, leftrightarrow, "\sim"] & \\
... & & H_{d-1}C_{s_i}^{s_{i+1}} & & ...
\ecd 
either $m$ and $n'$ are both isomorphisms, or $m'$ and $n$ are both isomorphisms. The diagram commutes because Alexander duality is natural with respect to inclusions. 

Happily, we can remove isomorphic terms when taking inverse limits. For instance, if in the above diagram $m$ was an isomorphism, we could replace 
\[ H^0 X_{s_i} \xleftarrow{m} H^0 X_{s_i}^{s_{i+1}} \xrightarrow{n} H^0 X_{s_{i+1}} \]
with
\[ H^0 X_{s_i} \xrightarrow{nm^{-1}} H^0 X_{s_{i+1}}. \]
Associated to this reduced diagram, we have a dual reduced diagram, with arrows
\[ H_{d-1} C_{s_i} \xrightarrow{n'^{-1}m'} H_{d-1} C_{s_{i+1}}. \]
We therefore have an isomorphism of diagrams of $k$-algebras, and so $\lim \Hom_{k-alg}(H_{d-1}(ZC), k)$ computes $\lim \pi_0(ZX)$. This completes the proof of the theorem.


\medskip
\printbibliography

\end{document}